\title[Canonical liftings of Calabi--Yau hypersurfaces]{Canonical liftings of Calabi--Yau hypersurfaces:\\
Dwork hypersurfaces}
\author{Przemysław Grabowski}
\date{\today}
\newcommand{\cO}{\mathcal{O}}
\newcommand{\op}{\operatorname}
\renewcommand{\phi}{\varphi}
  \newtheorem{thm}{Theorem}[section]
  \newtheorem{lemma}[thm]{Lemma}
  \newtheorem{prop}[thm]{Proposition}
  \newtheorem{cor}[thm]{Corollary}
  \theoremstyle{definition} 
  \newtheorem{defin}[thm]{Definition}
  \newtheorem{remark}[thm]{Remark}
  \newtheorem{example}[thm]{Example}
\begin{document}

\begin{abstract}
We explicitly compute canonical liftings modulo $p^2$ in a sense of Achinger--Zdanowicz of Dwork hypersurfaces. The computation involves studying a compatibility between Hodge filtrations and a crystalline Frobenius. In particular, remarkably, we explicitly compute a partial data of the crystalline Frobenius modulo $p^2$.
\end{abstract}

\maketitle

\section*{Introduction}

For every variety over a field of positive characteristic  one can consider a space of all its liftings to characteristic zero. This space can be fully described in terms of deformation theory. Moreover, some special families of varieties such as abelian varieties admit distinguished elements in their spaces of liftings: canonical liftings. These can be used to form a functor from abelian varieties in positive characteristic to abelian varieties in characteristic zero. This functor, the canonical lifting, is a fascinating object to study with many applications. Nevertheless, abelian varieties are not the only family admitting canonical liftings, another one consists of Calabi--Yau varieties. In this paper, we compute some explicit equations related to canonical liftings of Calabi--Yau projective hypersurfaces.

Let $X_0$ be a smooth and proper scheme of dimension $d$ over a perfect field $k$ of characteristic $p>0$ such that $\omega_{X_0}\simeq\cO_{X_0}$. In \cite{AchingerZdanowicz}, generalizing \cite{KatzSerreTate}, \cite{DeligneSerreTate}, \cite{Nygaard}, it was observed that if $X_0$ is \emph{$1$-ordinary} meaning that the Frobenius $F^*\colon H^d(X_0, \cO_{X_0})\to H^d(X_0, \cO_{X_0})$ is bijective then $X_0$ admits a \emph{canonical flat lifting} $X_{\rm can}$ over $W_2(k)$, described as a closed subscheme of $W_2(X)$. For $p>2$ and under certain technical conditions (satisfied in most cases of practical interest), $X_{\rm can}$ can be characterized as the unique lifting for which the crystalline Frobenius morphism
\[ 
    \phi \colon H^d_{\rm dR}(X/W_2(k)) \to H^d_{\rm dR}(X/W_2(k)) 
\]
preserves the Hodge filtration ${\rm Fil}^\bullet$, Theorem 5.7.1 in \cite{AchingerZdanowicz}. However, neither the construction nor the above characterization allow for a reasonably explicit description of $X_{\rm can}$, say in terms of the equations defining $X_0$. In fact, already for elliptic curves a ``computational'' approach to the canonical lifting presents serious difficulties (see \cite{SatohCanonical}, \cite{satoh2003fast}). 

The goal of this paper is the infinitesimal study of the interplay between the crystalline Frobenius and the Hodge filtration, with the above problem in mind. Our methods are limited to working modulo $p^2$ for $p>2$, and give a completely explicit answer only for hypersurfaces. 
In this case (and again under technical conditions), by Mazur's divisibility estimates \cite{Mazur},
$\phi\colon H^d_{\rm dR}(X/W_2(k))\to H^d_{\rm dR}(X/W_2(k))$ vanishes on ${\rm Fil}^2$ and is divisible by $p$ on ${\rm Fil}^1$. It follows that the obstruction to the compatibility $\phi({\rm Fil}^i)\subseteq {\rm Fil}^i$ is a map of vector spaces over $k$
\[ 
    \gamma\colon H^{*-1}(X_0, \Omega^1_{X_0}) = {\rm Fil}^1/{\rm Fil}^2 H^*(X_0/k) \to {\rm Fil}^1/{\rm Fil}^0 H^*(X_0/k) = H^*(X_0, \mathcal{O}_{X_0}).
\]
suitably induced by $\phi/p$. Our main technical result, Corollary \ref{C1.4} and Proposition \ref{frobenius-final}, is the computation of this map. The main surprise is that in the case of hypersurfaces, we can express $\gamma$ in terms of the cohomology of the structure sheaf of the non-reduced hypersurface defined by the equation $f^2=0$. We wonder whether cohomology of the structure sheaf of larger nilpotent thickenings of $X$ can be used to study the crystalline Frobenius modulo $p^n$ for $n>2$.

In general, our method produces a system of linear equations one needs to solve to compute the canonical lifting. In the special case of Dwork hypersurfaces
\[ 
    X_0(\lambda) \quad \colon \quad \lambda \left( x_0^{N+1} +\cdots +x_{N}^{N+1}\right) = (N+1) x_0\cdot \ldots \cdot x_{N} \quad \subseteq \quad \mathbb{P}^{N}_{k}
\]
we can be more explicit, giving a formula for the canonical lifting (Theorem \ref{main theorem}). It is again a Dwork hypersurface $X(\eta) \subset \mathbb{P}^{N}_{W_2(k)}$ whose parameter $\eta\in W_2(k)$ is the unique one satisfying the following equation
\[
(N+1)\eta^{p}\mathbb{H}\mathbb{D}_{N+1}^{p-1}(\eta)
+\phi(\eta)\mathbb{H}\mathbb{D}_{N+1}^{2p-1}(\eta)=0,
\]
where $\phi$ is the canonical Frobenius lifting of $W_2(k)$. Moreover, the polynomials $\mathbb{H}\mathbb{D}_{N+1}^{mp-1}$ for $m=1,2$ admit the following congruences modulo $p^2$: 
\[
\mathbb{H}\mathbb{D}_{N+1}^{mp-1}(X)
\equiv{\left(-(N+1)\right)}^{mp-1}\sum_{i=0}^{\left[\frac{mp-1}{N+1}\right]}
\frac{\left(1-mpH_{i(N+1)}\right)\prod_{j=1}^{N}\{\frac{j}{N+1}\}_i}{{(i!)}^{N}} X^{i(N+1)} \ \left(mod \ p^2\right),
\]
where $H_k=\sum_{i=1}^k \frac{1}{i}$ and $\{a\}_i\coloneqq a(a+1)(a+2)\ldots(a+i-1)$, Proposition \ref{extended katz formulas}.
Interestingly, the above formula features truncated hypergeometric functions. It would be interesting to give this computation a more intrinsic meaning. 

\subsection*{Acknowledgements} 

Some of the results of this paper were a part of the author's master's thesis (University of Warsaw 2020).
The author would like to thank his master's supervisor Piotr Achinger: Piotr, you are awesome. And, we would like to thank Frits Beukers for sharing with us his unpublished computations of canonical liftings performed by a different method. Our results agree with his 
formulas.

The author was supported by NCN SONATA grant number 2017/26/D/ST1/00913.

\subsection*{Notation}

If $M$ is a flat $\mathbf{Z}/p^2$-module and $M_0 = M/pM$, we denote by $\times p\colon M_0\to M$ the unique map whose precomposition with the projection $M\to M_0$ is $p\colon M\to M$.

\section{Partial computation of the crystalline Frobenius}

\subsection{}
Let $X$ be a smooth scheme over $W_2(k)$. We write $X_0 = X\otimes_{W_2(k)} k$ and denote 
by $H = H^n_{\rm dR}(X/W_2(k))$ a fixed de Rham cohomology group, 
by ${\rm Fil}^i H$ its Hodge filtration, 
and by $\phi\colon H\to H$ the crystalline Frobenius. We assume that the Hodge cohomology groups $H^j(X, \Omega^i_{X/W_2(k)})$ are free and that the Hodge spectral sequence
\[ 
    E_1^{ij} = H^j(X, \Omega^i_{X/W_2(k)} )
    \quad \Rightarrow \quad
    H^{i+j}_{\rm dR}(X/W_2(k))
\]
degenerates, so that we have 
\[
    {\rm gr}^i H := {\rm Fil}^i H/{\rm Fil}^{i+1} H \simeq H^{n-i}(X, \Omega^i_{X/W_2(k)}).
\]

\subsection{}
Since $p>2$, Mazur's divisibility estimates \cite{Mazur} imply that
\[
    \phi({\rm Fil}^1 H)\subseteq pH\quad \text{and}\quad\phi({\rm Fil}^i H) = 0 \quad\text{for $i\ge 2$}, 
\]
and therefore $\phi$ induces a map $\op{gr} \phi\colon \op{gr}^1 H \to \op{gr}^0 H$ which vanishes modulo $p$.
(Indeed, we have 
\[
\op{Fil}^2 H\subset \op{Fil}^1 H \to pH \subset H \to \op{gr}^0 H
\]
and $\op{Fil}^2 H$
is mapped to zero.)
Since $\op{gr}^0 H$ is flat over $W_2(k)$, there exists a unique map
\[
    \gamma = \text{``$\phi/p$''}\colon H^{n-1}(X_0, \Omega^1_{X_0/k}) \to H^n(X_0, \mathcal{O}_{X_0}),
\]
making the following square commute
\[ 
    \xymatrix{
        \op{gr}^1 H \ar[d] \ar[r]^{\op{gr}\phi} & \op{gr}^0 H  \\
        \op{gr}^1 H_0 \ar[r]_\gamma  & \op{gr}^0 H_0. \ar[u]_{\times p}
    }
\]
Consequently, the following conditions are equivalent: 
\begin{itemize}
    \item $\phi({\rm Fil}^i H)\subseteq {\rm Fil}^i H$ for all $i\geq 0$,
    \item $\phi({\rm Fil}^1 H)\subseteq {\rm Fil}^1 H$,
    \item $\gamma=0$.
\end{itemize}
The goal of this section is to explicate the map $\gamma$.

\subsection{}
Let us first recall the construction of the crystalline Frobenius $\phi$ (see \cite[0 3.2.3]{Illusie_deRhamWitt}, and \cite{BerthelotOgus} for a comprehensive treatment). Let $i\colon X\to P$ be a closed immersion into a smooth $W_2(k)$-scheme $P$ defined by an ideal $I\subseteq \mathcal{O}_P$, and suppose that $P$ is endowed with a lifting $F\colon P\to P$ of the absolute Frobenius. We let $\overline{P}$ denote the PD-envelope of $i$ (with respect to the standard divided power structure on $(W_2(k), (p))$), so that we have a factorization
\[
    \xymatrix{
        X \ar@/^1.2em/[rr]^i \ar[r]_{\overline{i}} & \overline{P}\ar[r] & P.
    }
\]
The natural map from the PD-envelope of $X_0$ in $P$ to $\overline{P}$ is an isomorphism of schemes \cite[3.20 1)]{BerthelotOgus}, and consequently $F$ induces a map $\overline{F}\colon \overline{P}\to \overline{P}$. 
Indeed, the universal property of a PD-envelope \cite[3.19]{BerthelotOgus} says that there is a unique map making the following diagram commutative.
\begin{center}
    \begin{tikzcd}
    X_0\arrow[r]\arrow[d,"F"]&\overline{P}\arrow[d,dashed, "\overline{F}"]\arrow[r]&P\arrow[d, "F"]\\
    X_0\arrow[r]&\overline{P}\arrow[r]&P
    \end{tikzcd}
\end{center}


If $\Omega^\bullet_{\overline{P}} = \Omega^\bullet_{\overline{P}/W_2(k), PD}$ denotes the PD de Rham complex [Ill 0 3.1], and $\op{Fil}^i_{\overline{I}} \Omega^\bullet_{\overline{P}} $ denotes the subcomplex 
\[ 
    \overline{I}^{[i]} \to \overline{I}^{[i-1]}\Omega^1_{\overline{P}/W_2(k), PD} \to \cdots,
\]
the map $i$ induces quasi-isomorphisms $\Omega^\bullet_{\overline{P}}\simeq \Omega^\bullet_{X/W_2(k)}$ and $\op{Fil}^i_{\overline{I}} \Omega^\bullet_{\overline{P}}\simeq \Omega^{\bullet\geq i}_{X/W_2(k)}$ (see \cite[Appendix 2]{Mazur}),
and consequently an isomorphism of filtered $W_2(k)$-modules
\[ 
    (H=H^n(X/W_2(k)), \op{Fil}^\bullet) \simeq (H^n(\overline{P}, \Omega^\bullet_{\overline{P}}), \op{Fil}^\bullet_{\overline{I}}).
\]
The map $\overline{F}$ induces a morphism $\phi_F\colon \Omega^\bullet_{\overline{P}}\to \Omega^\bullet_{\overline{P}}$, and the induced endomorphism of 
\[
H=H^n(X/W_2(k))\simeq H^n(\overline{P}, \Omega^\bullet_{\overline{P}})
\]
is the crystalline Frobenius $\phi$.

\subsection{}
We shall now explicate the complex $K \coloneqq \Omega^\bullet_{\overline{P}}/\op{Fil}^2_{\overline{I}}\Omega^\bullet_{\overline{P}}$.

\begin{lemma}\label{L1.1}
  Let $A$ be a ring in which $p^n A = 0$ and let $F\colon A\to A$ be a lifting of Frobenius. Let $I\subseteq A$ be an ideal such that $F(I) \subseteq I^p + p^m A$ for some $m\leq n$ (which is automatic for $m=1$). Then 
  \[ F(I^a) \subseteq I^a \quad\text{for every integer} \quad a\geq \frac{p}{p-1}\left(\left\lceil \frac{n}{m} \right\rceil - 1\right). \]
  In particular, if $p^2 A = 0$, then for every ideal $I\subseteq A$ we have $F(I^2)\subseteq I^2$.
\end{lemma}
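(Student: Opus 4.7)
The plan is to reduce to the statement $F(x_1 \cdots x_a) \in I^a$ for arbitrary $x_1, \ldots, x_a \in I$, which suffices because $F$ is a ring homomorphism and $I^a$ is additively generated by such $a$-fold products. Using the hypothesis I would write $F(x_i) = y_i + p^m z_i$ with $y_i \in I^p$ and $z_i \in A$, and then expand
$$
F(x_1 \cdots x_a) \;=\; \prod_{i=1}^{a}(y_i + p^m z_i) \;=\; \sum_{S \subseteq \{1,\ldots,a\}} p^{m|S|}\Bigl(\prod_{j \notin S} y_j\Bigr)\Bigl(\prod_{i \in S} z_i\Bigr),
$$
so that the summand indexed by a set $S$ of cardinality $k$ lies in $p^{mk} I^{p(a-k)}$.

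Next I would dispatch each summand via a two-case analysis on $k$. If $p(a-k) \geq a$, equivalently $k \leq a(p-1)/p$, the summand already sits in $I^{p(a-k)} \subseteq I^a$. Otherwise $k > a(p-1)/p$, and I would annihilate the summand using $p^n A = 0$; this works as soon as $mk \geq n$, i.e.\ $k \geq \lceil n/m \rceil$. Hence the argument closes provided every integer $k > a(p-1)/p$ satisfies $k \geq \lceil n/m \rceil$, and because $\lceil n/m \rceil - 1$ is an integer this is equivalent to $a(p-1)/p \geq \lceil n/m \rceil - 1$, which is precisely the hypothesized lower bound on $a$.

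For the parenthetical ``automatic for $m=1$'', the fact that $F$ lifts Frobenius means $F(x) - x^p \in pA$ for every $x \in A$, and when $x \in I$ the term $x^p$ already lies in $I^p$, so $F(I) \subseteq I^p + pA$ is automatic. The final ``in particular'' statement is then the special case $n=2$, $m=1$, $a=2$: the bound reduces to $2 \geq p/(p-1)$, valid for every prime $p \geq 2$. The whole argument is essentially bookkeeping — a two-variable count trading $I$-adic order against $p$-adic order — and I do not anticipate any obstacle beyond organizing the expansion so that the two regimes (high $I$-adic vanishing vs.\ high $p$-adic vanishing) cover every index $k$.
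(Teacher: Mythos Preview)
Your proof is correct and follows essentially the same approach as the paper: both expand $F(I^a)\subseteq (I^p+p^m A)^a$ into terms $p^{mk}I^{p(a-k)}$ and check that each term lies in $I^a$ either by the $I$-adic degree count $p(a-k)\geq a$ or by $p$-adic vanishing $mk\geq n$. The paper's version is slightly terser, working directly with ideals rather than reducing to products of elements, but the content is identical.
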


\begin{proof}
Let $j=\lceil \frac n m \rceil$ be the smallest integer such that $mj\geq n$. We have
\[ 
  F(I^a) \subseteq (I^p + p^m A)^a = I^{pa} + p^m I^{p(a-1)} + \cdots + p^{m(j-1)} I^{p(a-j+1)}, 
\]
and all summands are contained in $I^a$ since $p(a-j+1)\geq a$.

The last part follows from the general statement by putting $n=2, m=1, a=2$.
\end{proof}

\begin{lemma}\label{L1.2}
Let $Y\subseteq P$ be the closed subscheme defined by the ideal $I^2$.
\begin{enumerate}[(a)]
    \item We have an isomorphism $\cO_{\overline{P}}/\overline{I}^{[2]}\simeq \cO_P/I^2 = \cO_Y$.
    \item The map $F\colon P\to P$ preserves the ideal $I^2$ and hence induces a lifting of Frobenius on $Y$. 
\end{enumerate}
\end{lemma}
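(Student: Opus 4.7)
For part (a), the plan is to produce the natural ring map $\cO_Y = \cO_P/I^2 \to \cO_{\overline{P}}/\overline{I}^{[2]}$ from the structure map $\cO_P \to \cO_{\overline{P}}$, observing that $I$ is sent into $\overline{I}$, hence $I^2$ into $\overline{I}^2 \subseteq \overline{I}^{[2]}$. Surjectivity is immediate from the standard description of a PD envelope: $\cO_{\overline{P}}$ is generated over $\cO_P$ by the divided powers $\gamma_n(x)$ for $x \in \overline{I}$ and $n \geq 1$; those with $n \geq 2$ lie in $\overline{I}^{[2]}$, while $\gamma_1(x)=x$ is already in $\cO_P$. For injectivity, I would compare the two short exact sequences
\[
0 \to I/I^2 \to \cO_P/I^2 \to \cO_X \to 0,
\qquad
0 \to \overline{I}/\overline{I}^{[2]} \to \cO_{\overline{P}}/\overline{I}^{[2]} \to \cO_X \to 0,
\]
and invoke the five lemma. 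The right-hand vertical map is the identity on $\cO_X$, and the left-hand vertical map is the standard isomorphism $I/I^2 \simeq \overline{I}/\overline{I}^{[2]}$ valid for any PD envelope (see \cite{BerthelotOgus}).

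Part (b) follows essentially directly from Lemma \ref{L1.1}. Since $P$ is a $W_2(k)$-scheme, we have $p^2\cO_P = 0$, and the final sentence of that lemma gives $F(I^2) \subseteq I^2$. Consequently $F$ descends to an endomorphism $F|_Y \colon Y \to Y$. To verify that $F|_Y$ is a lifting of absolute Frobenius, I would reduce modulo $p$: by assumption $F \bmod p$ is the absolute Frobenius $x \mapsto x^p$ on $P_0$, which preserves $I \cdot \cO_{P_0}$ (since for $x \in I$ one has $x^p \in I^p \subseteq I^2$) and therefore descends on $Y_0$ to its absolute Frobenius.

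The only non-formal ingredient is the identification $I/I^2 \simeq \overline{I}/\overline{I}^{[2]}$ used in (a), which I would black-box from Berthelot--Ogus; in the present setting $p>2$ it is especially transparent because $\gamma_2(x)=x^2/2 \in I^2$, so the quotient by divided powers of order $\geq 2$ sees no new content beyond the ordinary squared ideal. A small care point in (b) is that $F$ need not preserve $I$ itself --- only $I^2$ --- so Lemma \ref{L1.1} is essential, not decorative. Everything else in both parts is formal bookkeeping.
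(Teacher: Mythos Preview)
Your proof is correct. For part (b) you proceed exactly as the paper does, invoking Lemma~\ref{L1.1}; the paper's own proof of Lemma~\ref{L1.2} in fact does not spell out (b) at all, treating it as immediate from the preceding lemma.

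For part (a) your route differs from the paper's. The paper argues directly from the explicit construction of the PD envelope in the proof of \cite[3.19]{BerthelotOgus}: elements of $\cO_{\overline{P}}$ are built from $\cO_P$ by adjoining free symbols for higher divided powers of elements of $I$, and quotienting by $\overline{I}^{[2]}$ kills precisely those new symbols together with $I^2$, leaving $\cO_P/I^2$. Your argument instead constructs the comparison map, proves surjectivity from the same generation statement, and then deduces injectivity via the five lemma, black-boxing the conormal identification $I/I^2 \simeq \overline{I}/\overline{I}^{[2]}$ from Berthelot--Ogus. Both approaches rest on the same structural input; yours is more formally organized but imports a slightly heavier citation, while the paper's is more hands-on and correspondingly informal. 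Your aside that $\gamma_2(x) = x^2/2 \in I^2$ for $p>2$ is close in spirit to the paper's direct reasoning and could in fact be leveraged into a self-contained argument avoiding the black box.
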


\begin{proof}
     We claim that $\cO_{\overline{P}}/\overline{I}^{[2]}\simeq \cO_P/I^2$. It follows from the construction of $\overline{P}$ from $P$ and $I$ made in the proof of \cite[3.19]{BerthelotOgus}. Indeed, elements of $\cO_{\overline{P}}$ are equivalence classes of elements from $\cO_P$ with some extra free variables depending on $I$. Then, dividing by $\overline{I}^{[2]}$ does two things. First, it kills all variables that are not identified with any elements from $\cO_P$. Second, it kills elements from $I^2$. This gives the isomorphism.
\end{proof}

Consider the two-term complex
\[
    K = \Omega^\bullet_{\overline{P}} / \op{Fil}^2_{\overline{I}}\Omega^\bullet_{\overline{P}} = \left[\cO_Y \xrightarrow{d} \Omega^1_{P/W_2(k)}|_X \right],
\]
with the differential induced by $d\colon \mathcal{O}_P\to \Omega^1_{P/W_2(k)}$. 
We equip $K$ with the two-step filtration induced by $\op{Fil}^\bullet_{\overline{I}} $, explicitly: 
\[ 
    {\rm Fil}^2 K = 0 \quad \subseteq \quad  {\rm Fil}^1 K = \left[I/I^2 \xrightarrow{d} \Omega^1_{P/W_2(k)}|_X \right] 
    \quad \subseteq \quad {\rm Fil}^0 K = K,
\]
so that we have quasi-isomorphisms ${\rm gr}^0 K \simeq \mathcal{O}_X$ and ${\rm gr}^1 K \simeq \Omega^1_{X/W_2(k)}[-1]$, the latter induced by the conormal sequence of $i\colon X\to P$. 
A priori, the complex $K$ is only on $\overline{P}$, however, we clearly see it is a pushforward of a complex from $Y$ whose gradations are pushforwards of sheaves from $X$.
Moreover, the map $\phi_F\colon \Omega^\bullet_{\overline{P}} \to \Omega^\bullet_{\overline{P}}$ preserves $\op{Fil}^2_{\overline{I}}$ and hence induces a map $\phi\colon K\to K$. 
Indeed, the preservation follows from easy calculations similar to the ones from the proof of Lemma \ref{L1.1}; we take a typical element from a set $\overline{I}^{[2-i]}\Omega^i_{\overline{P}/W_2(k)}$ and we apply $\overline{F}$ to it, then after few elementary manipulations the result is obviously in $\op{Fil}^2_I$ especially by $p^2=0$, $\overline{F}(\Omega^i_{\overline{P}/W_2(k)})\subset p^i \Omega^i_{\overline{P}/W_2(k)}$, $\overline{F}\gamma_i=\gamma_i F$, and the fact that for any $x$ there is an $x'$ such that $F(x)=x^p+px'$.

\begin{thm}\label{Th1.3}
    The quasi-isomorphisms $\op{Fil}^i_{\overline{I}} \Omega^\bullet_{\overline{P}}\simeq \Omega^{\bullet\geq i}_{X/W_2(k)}$ induce an isomorphism $H^n(X, K)\simeq H/{\rm Fil}^2 H$ compatible with the maps $\phi$ and identifying the image of $H^n(X, {\rm Fil}^1 K) \simeq H^{n-1}(X, \Omega^1_X)$ with $\op{Fil}^1 H/\op{Fil}^2 H$. 
\end{thm}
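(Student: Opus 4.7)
The plan is to run two hypercohomology long exact sequences in turn---one to cut off $\op{Fil}^2_{\overline{I}}\Omega^\bullet_{\overline{P}}$ and recover $H/\op{Fil}^2 H$, and another to isolate $\op{Fil}^1 K$ inside $K$---using the assumed Hodge-to-de Rham degeneration throughout to make the relevant filtrations strict. I would begin with the short exact sequence of complexes of sheaves on $\overline{P}$,
\[
0 \to \op{Fil}^2_{\overline{I}}\Omega^\bullet_{\overline{P}} \to \Omega^\bullet_{\overline{P}} \to K \to 0.
\]
Via the quasi-isomorphisms $\op{Fil}^i_{\overline{I}}\Omega^\bullet_{\overline{P}} \simeq \Omega^{\bullet\geq i}_{X/W_2(k)}$, hypercohomology of the first two terms computes the Hodge filtration $\op{Fil}^\bullet H$ on $H$. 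Degeneration of the Hodge spectral sequence makes the natural map $H^m(X,\Omega^{\bullet\geq 2}_{X/W_2(k)})\to H^m(X,\Omega^\bullet_{X/W_2(k)})$ injective with image $\op{Fil}^2 H^m$ for every $m$; feeding this into the long exact sequence in degree $n$ yields $0\to \op{Fil}^2 H\to H\to H^n(X,K)\to 0$, which gives the first isomorphism.

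For compatibility with $\phi$, I would invoke the fact (established just before the theorem via Lemma \ref{L1.1}) that $\overline{F}$ preserves $\op{Fil}^2_{\overline{I}}$, so $\phi_F$ on $\Omega^\bullet_{\overline{P}}$ descends to an endomorphism of $K$. The short exact sequence above is then a short exact sequence of complexes with endomorphism, and naturality of the long exact sequence guarantees that the induced map on $H^n(X,K)$ agrees with the reduction of $\phi$ modulo $\op{Fil}^2 H$.

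To pin down $H^n(X,\op{Fil}^1 K)$, I would apply the same mechanism to
\[
0\to \op{Fil}^1 K \to K \to \op{gr}^0 K \to 0,
\]
with $\op{gr}^0 K \simeq \mathcal{O}_X$. Combined with the already-established $H^n(X,K)=H/\op{Fil}^2 H$, the long exact sequence plus strictness produced by degeneration identify the image of $H^n(X,\op{Fil}^1 K)\to H^n(X,K)$ with $\op{Fil}^1 H/\op{Fil}^2 H$. Separately, the two-term complex $\op{Fil}^1 K = [I/I^2 \to \Omega^1_{P/W_2(k)}|_X]$ is quasi-isomorphic to $\Omega^1_{X/W_2(k)}[-1]$ via the conormal sequence $0\to I/I^2 \to \Omega^1_{P/W_2(k)}|_X \to \Omega^1_{X/W_2(k)} \to 0$, which is short exact because both $X$ and $P$ are smooth over $W_2(k)$. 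Hence $H^n(X,\op{Fil}^1 K)\simeq H^{n-1}(X,\Omega^1_X)$, as claimed.

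The main obstacle is purely bookkeeping: to run the long exact sequence arguments one needs the degeneration hypothesis to yield honest strictness at \emph{both} ends---so that both the incoming kernel $H^n(X,\op{Fil}^2_{\overline{I}}\Omega^\bullet_{\overline{P}})\hookrightarrow H^n(X,\Omega^\bullet_{\overline{P}})$ and the outgoing map $H^{n+1}(X,\op{Fil}^2_{\overline{I}}\Omega^\bullet_{\overline{P}})\hookrightarrow H^{n+1}(X,\Omega^\bullet_{\overline{P}})$ are injective, and similarly for the second short exact sequence. Once this strictness is granted, the remainder is a routine diagram chase tracking the conormal sequence and the two filtrations against each other.
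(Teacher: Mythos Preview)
Your proof is correct and follows the same strategy as the paper: use degeneration of the Hodge spectral sequence to make the maps $H^m(\op{Fil}^i_{\overline{I}}\Omega^\bullet_{\overline{P}})\to H^m(\Omega^\bullet_{\overline{P}})$ injective for all $m$, so that the long exact sequences split into short ones. The only cosmetic difference is in the second step: the paper applies the same device to $0\to \op{Fil}^2_{\overline{I}}\Omega^\bullet_{\overline{P}}\to \op{Fil}^1_{\overline{I}}\Omega^\bullet_{\overline{P}}\to \op{Fil}^1 K\to 0$ to identify $H^n(\op{Fil}^1 K)$ directly with $\op{Fil}^1 H/\op{Fil}^2 H$, whereas you reach the same conclusion via $0\to \op{Fil}^1 K\to K\to \mathcal{O}_X\to 0$ together with the conormal sequence.
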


\begin{proof}
It follows from a general fact that for any short exact sequence of chain complexes $0\to B\to A\to B/A\to0$ such that for every $n$ the natural map $H^n(B)\to H^n(A)$ is an inclusion we have a short exact sequence $0\to H^n(B)\to H^n(A)\to H^n(B/A)\to 0$. Indeed, this fact follows from an observation that the maps $H^n(B)\to H^n(A)$ being inclusions is equivalent to connecting maps $H^n(A/B)\to H^{n+1}(B)$ being zero in the long exact sequence.

The isomorphisms in the theorem follows from $B=\op{Fil}^2_{\overline{I}} \Omega^\bullet_{\overline{P}}, A= \Omega^\bullet_{\overline{P}}$ and $B=\op{Fil}^2_{\overline{I}} \Omega^\bullet_{\overline{P}}, A=\op{Fil}^1_{\overline{I}} \Omega^\bullet_{\overline{P}}$. In both of the cases, the assumption about inclusions follows from the degeneration of the Hodge spectral sequence via the quasi-isomorphisms.
\end{proof}

\subsection{} Here we conclude how to check if $\gamma=0$.

\begin{cor}\label{C1.4}
The map $\gamma$ is the unique map making the diagram below commute.
\[ 
    \xymatrix{
        H^{n-1}(X_0, \Omega^1_{X_0/k})  \ar[rrr]^\gamma & & & H^n(X_0, \mathcal{O}_{X_0})\ar[d]_{\times p} \\
        H^{n-1}(X, \Omega^1_{X/W_2(k)}) \ar[r]\ar[u] \ar[d]_{\text{conormal seq.}} & H^n(Y, K) \ar[d] \ar[r]_\phi & H^n(Y, K) \ar[d] \ar[r] & H^n(X, \mathcal{O}_X)  \ar@{=}[d]  \\
        H^{n}(X, I/I^2) \ar[r]\ar[d]_d & H^n(Y, \mathcal{O}_Y) \ar[r]_{F^*} &  H^n(Y, \mathcal{O}_Y) \ar[r] & H^n(X, \mathcal{O}_X)\\
        H^n(X,\Omega_P |_X)
    }
\]
\end{cor}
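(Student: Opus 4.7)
My plan is to derive the corollary directly from Theorem \ref{Th1.3} by chasing identifications; no genuinely new computation is needed, only careful bookkeeping of filtrations, short exact sequences, and the compatibility of $\phi$ with the two-step filtration on $K$.

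First I will invoke Theorem \ref{Th1.3} to replace $H^n(Y,K)$ with $H/\op{Fil}^2 H$ in a way compatible with $\phi$; simultaneously, the quasi-isomorphism $\op{Fil}^1 K\simeq \Omega^1_{X/W_2(k)}[-1]$ identifies $H^{n-1}(X,\Omega^1_{X/W_2(k)})$ with $H^n(Y,\op{Fil}^1 K)$, which in turn maps into $H^n(Y,K)\simeq H/\op{Fil}^2 H$ as the inclusion of $\op{Fil}^1 H/\op{Fil}^2 H$, while the quasi-isomorphism $K/\op{Fil}^1 K\simeq \mathcal{O}_X[0]$ identifies the rightmost group of the middle row with $\op{gr}^0 H$. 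Under these identifications the composition along the middle row becomes exactly $\op{gr}\phi\colon \op{gr}^1 H\to \op{gr}^0 H$, so commutativity of the outer top rectangle is precisely the defining commutative square of $\gamma$.

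For the lower part of the diagram I will use Lemma \ref{L1.2}(b): the endomorphism $\phi\colon K\to K$ is in degree zero simply $F^*\colon \mathcal{O}_Y\to \mathcal{O}_Y$, so the projection of complexes $K\to \mathcal{O}_Y[0]$ arising from the short exact sequence $0\to \Omega^1_P|_X[-1]\to K\to \mathcal{O}_Y[0]\to 0$ intertwines $\phi$ with $F^*$, which handles the middle square between rows two and three. The left vertical map $H^{n-1}(X,\Omega^1_X)\to H^n(X,I/I^2)$ is the connecting morphism of the conormal sequence $0\to I/I^2\to \Omega^1_P|_X\to \Omega^1_X\to 0$, and commutativity of the lower-left square then reduces to a naturality statement comparing this connecting morphism with the one coming from $0\to \op{Fil}^1 K\to K\to \mathcal{O}_X[0]\to 0$; this is a direct diagram chase once one rewrites $\op{Fil}^1 K=[I/I^2\to \Omega^1_P|_X]$.

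Finally, uniqueness of $\gamma$ holds because $\times p\colon H^n(X_0,\mathcal{O}_{X_0})\to H^n(X,\mathcal{O}_X)$ is injective---flatness of $H^n(X,\mathcal{O}_X)$ over $W_2(k)$ forces the kernel of multiplication by $p$ on this group to be exactly $pH^n(X,\mathcal{O}_X)$---so the outer edge of the diagram pins $\gamma$ down uniquely. The main obstacle I anticipate is bookkeeping rather than mathematics: one must keep the two-step filtration $\op{Fil}^\bullet_{\overline{I}}$ on $K$, the stupid filtration of the two-term complex $K$, and the conormal sequence consistently aligned so that the passage from the middle row to the bottom row is transparent. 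Lemma \ref{L1.2}(b) carries all of the genuine content, while the rest of the argument is diagram chasing sheltered by Theorem \ref{Th1.3}.
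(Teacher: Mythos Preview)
Your proposal is correct and follows essentially the same approach as the paper: invoke Theorem~\ref{Th1.3} to identify the middle row with the defining square of $\gamma$, then use the short exact sequence $0\to \Omega^1_P|_X[-1]\to K\to \mathcal{O}_Y\to 0$ together with Lemma~\ref{L1.2}(b) to pass to the bottom row. The paper's proof is terser but rests on exactly the same ingredients you identify; your more detailed bookkeeping of the conormal sequence and the two filtrations on $K$ simply spells out what the paper leaves implicit.
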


\begin{proof}
By Theorem \ref{Th1.3} and the definitions above, the top rectangle is
\begin{center}
    \begin{tikzcd}
    \op{gr}^1 H_0 \ar[rrr, "\gamma"] & & & \op{gr}^0 H_0  \ar[d, "\times p"'] \\
    \op{gr}^1 H \ar[r] \ar[u]  & H/\op{Fil}^2 H  \ar[r, "\phi"] & H/\op{Fil}^2 H   \ar[r] & \op{gr}^0 H   
    \end{tikzcd}
\end{center}
what is the definition of $\gamma$. The map $H^n(Y, K)\to H^n(Y, \mathcal{O}_Y)$ comes from a short exact sequence
$0\to \Omega^1_{P}|_X[-1]\to K \to \cO_Y\to 0$. The rest follows from the fact that $\phi$ is induced by $F$ and $F$ restricts to $Y$ by Lemma \ref{L1.2}.
\end{proof}

Finally, we can state an explicit criterion for the map $\gamma$ to vanish.

\begin{cor}\label{obstruction}
    The map $\gamma$ is zero if and only if the map $H^n(X,I/I^2)\to H^n(X,\cO_X)$ (from the diagram from Corollary \ref{C1.4}) restricted to the kernel of $d$ is zero.
\end{cor}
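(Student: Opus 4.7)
The plan is a two-step diagram chase: first use Corollary \ref{C1.4} to reduce the vanishing of $\gamma$ to the vanishing of a certain composite defined on $H^{n-1}(X,\Omega^1_{X/W_2(k)})$, and then use the long exact sequence of the conormal short exact sequence to rewrite that vanishing as the condition on $\ker d$ in the statement.

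For the first step, I note that $\cO_X$ is flat over $W_2(k)$, so $\times p\colon H^n(X_0,\cO_{X_0})\to H^n(X,\cO_X)$ is injective; and the Hodge-freeness assumption makes the reduction $H^{n-1}(X,\Omega^1_{X/W_2(k)})\to H^{n-1}(X_0,\Omega^1_{X_0/k})$ surjective. Combined with commutativity of the top square of Corollary \ref{C1.4}, this gives that $\gamma=0$ if and only if the middle-row composite
\[
H^{n-1}(X,\Omega^1_{X/W_2(k)})\longrightarrow H^n(Y,K)\xrightarrow{\phi} H^n(Y,K)\longrightarrow H^n(X,\cO_X)
\]
vanishes. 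By commutativity of the lower squares of that diagram, this composite equals $\Phi\circ\partial$, where $\partial$ is the conormal connecting map into $H^n(X,I/I^2)$ and $\Phi\colon H^n(X,I/I^2)\to H^n(X,\cO_X)$ is the composite through $H^n(Y,\cO_Y)$ described in the statement.

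For the second step, since $X$ sits smoothly inside the smooth $W_2(k)$-scheme $P$, the conormal sequence
\[
0\longrightarrow I/I^2\longrightarrow \Omega^1_{P/W_2(k)}|_X\longrightarrow \Omega^1_{X/W_2(k)}\longrightarrow 0
\]
is short exact. Its long cohomology sequence contains
\[
H^{n-1}(X,\Omega^1_{X/W_2(k)})\xrightarrow{\partial} H^n(X,I/I^2)\xrightarrow{d_*} H^n(X,\Omega^1_{P/W_2(k)}|_X),
\]
so $\op{im}(\partial)=\ker(d_*)$. Therefore $\Phi\circ\partial=0$ if and only if $\Phi$ vanishes on $\ker(d_*)$, yielding the claim.

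No serious obstacle is anticipated; the argument is essentially formal. The points requiring care are the injectivity of $\times p$ and the surjectivity of the reduction mod $p$ on Hodge cohomology, both supplied by the standing flatness/freeness hypotheses, and the exactness on the left of the conormal sequence, supplied by the smoothness of $X\hookrightarrow P$.
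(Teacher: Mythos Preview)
Your argument is correct and follows essentially the same route as the paper's proof: reduce $\gamma=0$ to the vanishing of the composite $H^{n-1}(X,\Omega^1_{X/W_2(k)})\to H^n(X,\cO_X)$, factor through $H^n(X,I/I^2)$ via the conormal connecting map, and use exactness to identify its image with $\ker d$. You are slightly more explicit than the paper in invoking injectivity of $\times p$ and surjectivity of reduction mod $p$, but the substance is identical.
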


\begin{proof}
    The map $\gamma$ is zero if and only if the map $H^{n-1}(X, \Omega^1_{X/W_2(k)})\to H^n(X, \mathcal{O}_X)$ is zero. The later factorizes through $H^n(X,I/I^2)$. But, by exactness, the image in $H^n(X,I/I^2)$ is the kernel of $d$. So, we have $H^{n-1}(X, \Omega^1_{X/W_2(k)})\to \op{ker}(d)\to H^n(X, \mathcal{O}_X)$. The first arrow is surjective, therefore the composition is zero if and only if $\op{ker}(d)\to H^n(X, \mathcal{O}_X)$ is zero.
\end{proof}

Often, the group $H^{n-1}(X, \Omega^1_{X/W_2(k)})$ is the kernel of $d$, for example for $P=\mathbb{P}^{n+1}_{W_2(k)}$, $X=V(f)$ a projective hypersurface, and $n\ge 3$. However, this observation is not relevant for our main result. Anyway, it is important to point out that the most bottom horizontal arrows provide an explicit way how to compute crystalline Frobenius modulo $p^2$. This is remarkable.

\section{Hypersurfaces}\label{section2}

The goal of this section is to describe the map $H^n(X,I/I^2)\to H^n(X,\cO_X)$ from Corollary \ref{obstruction} in terms of homogeneous polynomials for a projective hypersurface $X$ of dimension $n$ and degree $d$. 

\subsection{} Let $A$ be a ring, let $N$ be a natural number $\ge 2$, let $S = A[x_0, \ldots, x_N]$ be the standard-graded polynomial ring, and let $P = \op{Proj} S = \mathbb{P}^N_A$. Recall that the graded $S$-module $S^\vee := \bigoplus_{t\in \mathbf{Z}} H^N(P, \cO_P(t))$ is naturally isomorphic to the quotient of the Laurent polynomial ring $A[x_0^{\pm 1},\ldots, x_N^{\pm 1}]$ by the $S$-submodule generated by monomials $x_0^{a_0}\cdots x_N^{a_N}$ where $a_i\geq 0$ for at least one $i$, see \cite[Chapter III, 5]{Hartshorne}. 
As such, $S^\vee_d = H^N(P, \cO_P(d))$ has a basis consisting of monomials in $x_0^{a_0}\cdots x_N^{a_N}$ with $a_i<0$ and $\sum a_i = d$. Identifying $S^\vee_{-N-1}$ with $A$ using the basis $x_0^{-1}\cdots x_N^{-1}$, we thus have perfect pairings $S_d \times S^\vee_{-N-1-d}\to A$ which are the Serre duality pairings $H^0(P, \cO_P(d))\times H^N(P, \omega_P(-d))\to H^n(P, \omega_P)\simeq A$ using the isomorphism $\omega_P\simeq \cO_P(-N-1)$ defined by the element $\frac{dx_0}{x_0}\cdots \frac{dx_N}{x_N} \in H^0(P, \omega_P(N+1))$. Finally, we have the Euler sequence
\[ 
  0\to \Omega^1_P\to \cO_P(-1)^{N+1} \xrightarrow{(x_0, \ldots, x_N)} \cO_P\to 0
\]
where the first map sends $\omega$ to $(\omega\cdot \frac{d}{dx_i})$.

\subsection{}\label{cohomology of OX} Let $f\in S_d$ ($d\geq 1$) be a homogeneous element which is a nonzerodivisor, let $R = S/(f)$, and let $X = \op{Proj} R = V(f)\subseteq P$ be the hypersurface cut out by $f$. We set $n=N-1 \ge 1$. The ideal sheaf $I\subseteq \cO_P$ of $X$ is identified with $\cO_P(-d)$. This gives the short exact sequence $0\to \cO_P(-d)\to \cO_P\to\cO_X\to 0$ whose long exact sequence can be used to compute an identification of the graded $R$-module $R^\vee := \bigoplus_{t\in \mathbb{Z}} H^n(X, \cO_X(t))$ with the kernel of the map $f\colon S^\vee(-d)\to S^\vee$. 
In particular, we have 
$H^n(X, \cO_X)\simeq S^\vee_{-d}$, $H^n(X, \cO_X(-1))\simeq S^\vee_{-d-1}$, and 
\[
H^n(X,I/I^2)=H^n(X, \cO_X(-d))\simeq (A\cdot f)^\perp \subseteq S^\vee_{-2d}
\] 
since $\cO_X(-d)\simeq I/I^2$.

\subsection{} A standard calculation of cohomology groups via long exact sequences, see for example \cite[Pages 3, 4]{ComplexVectorBundles}, applied to twists of the Euler sequence and to a sequence $0\to \Omega^1_{P/A}(-d)\to \Omega^1_{P/A} \to \Omega^1_{P/A}|_X\to 0$ for $n\ge 2$ gives that 
\[
H^n(X,\Omega^1_{P/A}|_X)\simeq H^{n+1}(P, \Omega^1_{P/A}(-d)) \simeq \op{ker}( {S^\vee_{-d-1}}^{\oplus N+1}\xrightarrow{(x_0, \ldots, x_N)} S^\vee_{-d})
\]
and for $n=1$ we get a short exact sequence
\[
0\to H^1(P,\Omega^1_{P/A})\simeq S_0\to H^1(X,\Omega^1_{P/A}|_X) \to H^2(P, \Omega^1_{P/A}(-d)) \simeq \op{ker}( {S^\vee_{-d-1}}^{\oplus N+1}\xrightarrow{(x_0, x_1,x_2)} S^\vee_{-d})\to 0.
\]
In particular, $H^{n-1}(P,\Omega^1_{P/A}|_X)=0$ if and only if $n\ne 2$. And, for $n=2$, it is $S_0$.

\subsection{} We shall now explicate the composition of maps 
\[
H^{n}(X, I/I^2)\xrightarrow{d} H^n(X,\Omega^1_{P/A} |_X) \to H^{n+1}(P, \Omega^1_{P/A}(-d))
\]
The second arrow is always surjective, and it is an isomorphism for $n\ge 2$.

Suppose that $X$ is smooth over $A$, in which case we have the conormal sequence
\[ 
  0\to I/I^2 \xrightarrow{d} \Omega^1_{P/A}|_X\to \Omega^1_{X/A}\to 0.
\]
We can compute this $d$ at a level of cohomology groups explicitly by the following lemma.

\begin{lemma}\label{redarrow}
    The map $H^{n}(X, I/I^2) \to H^{n+1}(P, \Omega^1_{P/A}(-d))$ under the injection $H^n(X,I/I^2) \subseteq S^\vee_{-2d}$ and the isomorphism $H^{n+1}(P, \Omega^1_{P/A}(-d)) \simeq \op{ker}( {S^\vee_{-d-1}}^{\oplus N+1}\xrightarrow{(x_0, \ldots, x_N)} S^\vee_{-d})$ projected onto $i$-th summand of ${S^\vee_{-d-1}}^{\oplus N+1}$
    is given by the following formula:
    \[
    S^\vee_{-2d} \ni g \mapsto f\cdot \frac{dg}{dx_i} + 2\frac{df}{dx_i} \cdot g \in S^\vee_{-d-1}.
    \]
\end{lemma}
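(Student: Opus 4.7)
The plan is to lift the conormal map $d\colon I/I^2 \to \Omega^1_{P/A}|_X$ to a morphism between two natural two-term locally free resolutions, and then invoke naturality of the connecting homomorphism in the long exact sequence of cohomology. Specifically, I resolve $I/I^2 \simeq \cO_X(-d)$ by
\[
0 \to \cO_P(-2d) \xrightarrow{\cdot f} \cO_P(-d) \to \cO_X(-d) \to 0,
\]
and resolve $\Omega^1_{P/A}|_X$ by
\[
0 \to \Omega^1_{P/A}(-d) \xrightarrow{\cdot f} \Omega^1_{P/A} \to \Omega^1_{P/A}|_X \to 0.
\]
The middle vertical map lifting $d$ is forced to be $s \mapsto d(fs) = f\,ds + s\,df$ (via the identification $\cO_P(-d) \simeq I$, $s \mapsto fs$). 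Requiring the square with the left ``$\cdot f$'' arrows to commute then pins down the left vertical map as $t \mapsto f\,dt + 2t\,df$, since $f\cdot(f\,dt+2t\,df) = d(f^2 t)$ equals the composite $\cO_P(-2d) \xrightarrow{\cdot f} \cO_P(-d) \xrightarrow{s \mapsto d(fs)} \Omega^1_{P/A}$ applied to $t$.

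Having the chain map, the square
\[
\begin{tikzcd}
H^n(X,\cO_X(-d)) \ar[r,"\delta"] \ar[d,"d"'] & H^{n+1}(P,\cO_P(-2d)) \ar[d] \\
H^n(X,\Omega^1_{P/A}|_X) \ar[r,"\delta"] & H^{n+1}(P,\Omega^1_{P/A}(-d))
\end{tikzcd}
\]
commutes. The top $\delta$ is precisely the identification $H^n(X,\cO_X(-d)) \subseteq S^\vee_{-2d}$ with the kernel of $\cdot f$ recalled in \ref{cohomology of OX}. The bottom $\delta$ is (by construction in the previous subsection) the second arrow in the composition under consideration. Thus the composite $H^n(X,I/I^2) \to H^{n+1}(P,\Omega^1_{P/A}(-d))$ is computed as ``$g \mapsto f\,dg + 2g\,df$'' at the level of cohomology classes represented in $S^\vee_{-2d}$.

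It remains to translate this into coordinates via the Euler sequence. Twisting $0 \to \Omega^1_{P/A} \to \cO_P(-1)^{N+1} \to \cO_P \to 0$ by $-d$ and taking cohomology yields an injection $H^{n+1}(P,\Omega^1_{P/A}(-d)) \hookrightarrow (S^\vee_{-d-1})^{N+1}$ whose $i$-th component is induced by contraction with $\partial/\partial x_i$. Applying this contraction to $f\,dg + 2g\,df$ gives $f\,\partial_i g + 2(\partial_i f)\,g$, which is exactly the claimed formula. As a sanity check, $\sum_i x_i(f\,\partial_i g + 2 (\partial_i f)\,g) = f\cdot(-2d)g + 2g\cdot df = 0$ by the Euler relation, confirming that the image indeed lies in the kernel of $(x_0,\ldots,x_N)$.

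The main subtlety I expect is justifying the expression $f\,dt + 2t\,df$ as an honest morphism of line bundles (rather than a formal computation on the affine cone), since $d$ is not $\cO_P$-linear and $\cO_P(-2d)$ carries no canonical derivation. I would handle this by exhibiting the map through the identity $f \cdot (f\,dt + 2t\,df) = d(f^2 t)$: the right-hand side is a genuine section of $\Omega^1_{P/A}$ obtained from $f^2t \in \Gamma(I^2)$, and since it lies in the image of $\Omega^1_{P/A}(-d) \xrightarrow{\cdot f} \Omega^1_{P/A}$, dividing by $f$ gives a well-defined section of $\Omega^1_{P/A}(-d)$. Equivalently, one can work Čech-theoretically on the standard affine cover $\{D(x_i)\}$ where all line bundle trivializations are explicit and the formulas reduce to ordinary polynomial differentiation, which is how the final identification with $(S^\vee_{-d-1})^{N+1}$ is naturally carried out.
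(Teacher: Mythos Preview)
Your proof is correct and takes essentially the same approach as the paper: both lift the conormal map $d$ to a map between the two resolutions $0 \to \cO_P(-2d) \to \cO_P(-d) \to I/I^2 \to 0$ and $0 \to \Omega^1_{P/A}(-d) \to \Omega^1_{P/A} \to \Omega^1_{P/A}|_X \to 0$, identify the left vertical map via the Leibniz identity $d(f^2 t) = f\cdot(f\,dt + 2t\,df)$, and read off the formula through the connecting homomorphism. Your treatment is in fact more careful than the paper's (which openly calls its own argument ``a bit abusive''), particularly in handling the non-$\cO_P$-linearity of $d$ and in spelling out the Euler-sequence translation to the $i$-th coordinate.
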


\begin{proof}
    We have the following diagram.
    \begin{center}
    \begin{tikzcd}
         I^2=I(-d) \ar[d, "d"]\ar[r]& I \ar[d, "d"]\ar[r]& I/I^2 \ar[d, "d"] \\
         \Omega^1_{P/A}(-d)\ar[r]& \Omega^1_{P/A} \ar[r]& \Omega^1_{P/A}|_X 
    \end{tikzcd}   
    \end{center}
In this diagram, all $d$'s are induced by a universal derivation on $P$. In particular, they are not $\cO_P$-linear, so they do not induce morphisms between cohomology groups naturally. However, we can use these maps in local computations on representatives in \v{C}ech cohomology to get our formula anyway.

A bit abusively, it is done the following way.

Let $f^2 g$ be an element in $H^n(P,I^2)$ representing an element $g$ in $S^\vee_{-2d}$. Then, by the Leibniz rule, we have that $d(f^2 g)=f^2 dg + 2f gdf =f(fdg+2gdf)$. This means that $g$ is mapped to $fdg+2gdf$. The formula is proved.
\end{proof}

In the above formula, $d/dx_i$ act as differential operators of degree $-1$ on $S$ and on $A[x_0^{\pm 1}, \ldots, x_N^{\pm 1}]$ preserving the kernel of the projection onto $S^\vee$ and hence induce maps $S^\vee \to S^\vee(-1)$.

\subsection{}
Let $Y\coloneqq V(f^2)\subseteq P$.
The same calculation from \ref{cohomology of OX} performed for $f^2$ gives us that $\bigoplus_{t\in \mathbb{Z}} H^n(Y,\cO_Y(t))=\op{ker}(f^2:S^\vee(-2d)\to S^\vee)$. Moreover, $X$ being a closed subscheme of $Y$ gives us a short exact sequence
$
0\to I/I^2 \to \cO_Y \to \cO_X \to 0
$.
This sequence tells us two things. 

First, the map
$H^{n}(X, I/I^2) \to H^n(Y, \mathcal{O}_Y)$ under the above isomorphisms is the inclusion $(A\cdot f)^\perp\subseteq S^\vee_{-2d}$. 

Second, it tells us that the map
$H^n(Y, \mathcal{O}_Y) \to H^n(X, \mathcal{O}_X)$ is a restriction of the multiplication by $f$. Indeed, we have maps between parts of long exact sequences 
\begin{center}
    \begin{tikzcd}
        H^n(X,\cO_Y)\ar[r, hook]\ar[d] & H^N(P,\cO_P(-2d))=S^\vee_{-2d}\ar[r, "f^2"]\ar[d,"f"]  & H^N(P,\cO_P)\ar[d, equal]\\
        H^n(X,\cO_X)\ar[r, hook] & H^N(P,\cO_P(-d))=S^\vee_{-d}\ar[r, "f"]  & H^N(P,\cO_P).\\
    \end{tikzcd}
\end{center}

\subsection{}
We shall now compute the endomorphism $F^*$ on $H^n(Y,\cO_Y)\simeq S^\vee_{-2d}$.

\begin{prop}\label{frobenius}
Let $k$ be a perfect field of characteristic $p>2$. Let $A=W_2(k)$ be Witt vectors of length $2$ over $k$.

Let $F$ be a Frobenius lifting on $P$. Then $F$ restricts to a Frobenius lifting on $Y$ that induces the map
\[
F^*: g\mapsto F^*(g) \times F^*(f^2)/f^2
\]
on the cohomology group $H^{n}(Y,\cO_Y)\cong S_{-2d}^\vee$.
\end{prop}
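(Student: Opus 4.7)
The strategy is to trace the Frobenius action through the long exact sequence of
\[
0 \to I_Y \to \cO_P \to \cO_Y \to 0,
\]
where $I_Y=(f^2)\subseteq\cO_P$. By Lemma \ref{L1.2}(b), $F$ preserves $I_Y$, so it restricts to a Frobenius lifting on $Y$ and induces an endomorphism of the above short exact sequence. Since $H^n(P,\cO_P)=0$ and $H^{n+1}(P,\cO_P)=H^N(P,\cO_P)=0$, the connecting homomorphism $\delta\colon H^n(Y,\cO_Y)\to H^{n+1}(P,I_Y)$ is an isomorphism intertwining the two actions of $F^*$. It therefore suffices to describe $F^*$ on $H^{n+1}(P,I_Y)$.

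I would then pull back $F^*$ through the sheaf isomorphism $\cdot f^2\colon\cO_P(-2d)\xrightarrow{\sim}I_Y$; on cohomology this is exactly the identification $H^{n+1}(P,I_Y)\cong H^N(P,\cO_P(-2d))=S^\vee_{-2d}$ used in the excerpt. For a Laurent polynomial representative $g$ of a class in $S^\vee_{-2d}$, the corresponding class $f^2g\in H^{n+1}(P,I_Y)$ is sent by $F^*$ to $F^*(f^2)\cdot F^*(g)$, where $F^*$ denotes the ring endomorphism of the Laurent polynomial ring extending $F^*\colon S\to S$. The heart of the argument is the observation that $F^*(f^2)$ is divisible by $f^2$ in $\cO_P$: writing $F^*(f)=f^p+ph$ for some $h\in\cO_P$, we have
\[
F^*(f^2)=(f^p+ph)^2=f^{2p}+2phf^p
\]
modulo $p^2=0$, and since $p\ge 3$ both summands are visibly divisible by $f^2$. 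Because $f$ is a nonzerodivisor, the quotient $F^*(f^2)/f^2\in\cO_P$ is well-defined. Consequently
\[
F^*(f^2g)=f^2\cdot\bigl(F^*(g)\cdot F^*(f^2)/f^2\bigr),
\]
and pulling back through $(\cdot f^2)^{-1}$ yields the claimed formula $g\mapsto F^*(g)\cdot F^*(f^2)/f^2$.

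The main obstacle is bookkeeping rather than any serious calculation: one has to verify that the formula is well-defined on the quotient $S^\vee_{-2d}$, i.e.\ that both $F^*$ and multiplication by the polynomial $F^*(f^2)/f^2\in S$ preserve the submodule of Laurent polynomials in which at least one exponent is non-negative. Multiplication by an element of $S$ preserves this submodule trivially; for $F^*$ it is a short check, as follows. If $a_i\ge 0$ in a monomial $x^a$, then the factor $F^*(x_i)^{a_i}=(x_i^p+py_i)^{a_i}$ is a genuine polynomial in all variables, contributing only non-negative $x_i$-exponents, and for $j\ne i$ the factor $F^*(x_j)^{a_j}$ likewise contributes only non-negative $x_i$-exponents (since the $y_j$'s lie in $S$); hence every monomial in $F^*(x^a)$ has non-negative $x_i$-exponent.
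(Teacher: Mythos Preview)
Your proof is correct and follows essentially the same approach as the paper's: both identify $H^n(Y,\cO_Y)$ with $S^\vee_{-2d}$ via the connecting map of the sequence $0\to\cO_P(-2d)\xrightarrow{f^2}\cO_P\to\cO_Y\to 0$ and then compute $F^*$ on the left-hand term using the divisibility $f^2\mid F^*(f^2)$. The paper packages this as a three-row diagram passing through an auxiliary hypersurface $F(Y)=V(F^*(f^2))$, whereas you work directly with the ideal sheaf $I_Y$ and its $F^*$-endomorphism; this is only an organizational difference, and your extra check that the formula descends to $S^\vee_{-2d}$ is a detail the paper leaves implicit.
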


\begin{proof}
    Let $F(Y)\coloneqq V(F^*(f^2))\subseteq P$ be a hypersurface. We have the following diagram.
    \begin{center}
    \begin{tikzcd}
    0\arrow[r]& \cO_P(-2d)\arrow[d,"F^*"] \arrow[r, "f^2"]& \cO_P\arrow[d, "F^*"] \arrow[r]& \cO_Y\arrow[d, "F^*"]\arrow[r]& 0\\
    0\arrow[r]& \cO_P(-2dp)\arrow[d," F^*(f^2)/f^2"'] \arrow[r, " F^*(f^2)"]& \cO_P\arrow[d,equal] \arrow[r]& \cO_{F(Y)}\arrow[d, "mod \ f^2"]\arrow[r]& 0\\
    0\arrow[r]& \cO_P(-2d) \arrow[r, " f^2"]& \cO_P \arrow[r]& \cO_Y\arrow[r]& 0
    \end{tikzcd}.
\end{center}
This diagram makes sense, because $f^2$ divides $F^*(f^2)$. This is a part of Lemma \ref{L1.1}. Finally, the formula is obtained by using long exact sequences of cohomology groups for this simple diagram.
\end{proof}

\subsection{}
Finally, we can combine all the above into the following proposition.
\begin{prop}\label{frobenius-final}
    Let $k$ be a perfect field of characteristic $p>2$,
    let $A=W_2(k)$, and let $X\coloneqq V(f)\subseteq P=\mathbb{P}^N_{W_2(k)}$ be a smooth hypersurface of degree $d\ge 1$. Let $F$ be a Frobenius lifting on $P$, then the composition of the maps
    \[
    H^{n}(X, I/I^2) \to H^n(Y, \mathcal{O}_Y) \xrightarrow{F^*}  H^n(Y, \mathcal{O}_Y) \to H^n(X, \mathcal{O}_X)
    \]
    from Corollary \ref{C1.4} under isomorphisms
    $H^n(X,I/I^2) \simeq \op{ker}(f: S^\vee_{-2d}\to S^\vee_{-d})$ and $H^n(X,\cO_X)\simeq S^\vee_{-d}$ is given by the formula
    \[
    g\mapsto F^*(g) \times F^*(f^2)/f = F^*(g)\times  \left(-f^{2p-1}+2f^{p-1}F^*(f)\right).
    \]
\end{prop}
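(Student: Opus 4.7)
The proof is essentially a direct composition of the three maps identified in the preceding subsections, followed by a small algebraic rewriting of the resulting formula.

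The plan is as follows. First, I assemble the three arrows in the composition using the identifications of the cohomology groups with pieces of $S^\vee$:
\begin{enumerate}[(i)]
\item The first arrow $H^n(X, I/I^2) \to H^n(Y, \mathcal{O}_Y)$ is, by the discussion preceding Proposition \ref{frobenius}, the inclusion $(A\cdot f)^\perp \hookrightarrow S^\vee_{-2d}$ coming from the short exact sequence $0\to I/I^2 \to \mathcal{O}_Y \to \mathcal{O}_X \to 0$, so $g \mapsto g$.
\item The middle arrow $F^*\colon H^n(Y, \mathcal{O}_Y) \to H^n(Y, \mathcal{O}_Y)$ is, by Proposition \ref{frobenius}, the map $g \mapsto F^*(g) \cdot F^*(f^2)/f^2$.
\item The last arrow $H^n(Y, \mathcal{O}_Y) \to H^n(X, \mathcal{O}_X)$ is, again by the discussion preceding Proposition \ref{frobenius} (using the vertical arrow $f\colon S^\vee_{-2d}\to S^\vee_{-d}$ in the comparison of long exact sequences), multiplication by $f$.
\end{enumerate}
Composing (i), (ii), (iii) gives
\[
g \longmapsto F^*(g)\cdot \frac{F^*(f^2)}{f^2} \cdot f \;=\; F^*(g)\cdot \frac{F^*(f^2)}{f},
\]
which is the first equality.

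Second, I verify the identity $F^*(f^2)/f \equiv -f^{2p-1} + 2f^{p-1}F^*(f)\pmod{p^2}$. Since $F$ is a Frobenius lifting, there exists $e\in S$ with $F^*(f) = f^p + pe$; then modulo $p^2$,
\[
F^*(f^2) \;=\; (F^*(f))^2 \;\equiv\; f^{2p} + 2pf^p e,
\]
so dividing by $f$ yields $F^*(f^2)/f \equiv f^{2p-1} + 2pf^{p-1}e$. On the other hand,
\[
-f^{2p-1} + 2f^{p-1}F^*(f) \;=\; -f^{2p-1} + 2f^{p-1}(f^p + pe) \;=\; f^{2p-1} + 2pf^{p-1}e,
\]
which agrees. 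This completes the proof.

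There is no real obstacle: both (i) and (iii) have already been extracted from the short exact sequence $0\to I/I^2\to \mathcal{O}_Y\to \mathcal{O}_X\to 0$ and its comparison with $0\to \mathcal{O}_P(-2d)\to \mathcal{O}_P\to \mathcal{O}_Y\to 0$, and (ii) is exactly Proposition \ref{frobenius}. The only point requiring a line of verification is the rewriting of $F^*(f^2)/f$ modulo $p^2$ in terms of $F^*(f)$, and this is a one-step expansion using $F^*(f)\equiv f^p\pmod p$.
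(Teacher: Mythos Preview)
Your proposal is correct and follows essentially the same approach as the paper: both compose the three maps using the identifications already established in the preceding subsections, then verify the identity $F^*(f^2)/f=-f^{2p-1}+2f^{p-1}F^*(f)$ by writing $F^*(f)=f^p+pe$ and expanding modulo $p^2$. The only cosmetic difference is that the paper back-substitutes $e=(F^*(f)-f^p)/p$ directly, whereas you compute both sides separately and compare.
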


\begin{proof}
    Apart from the identity $F^*(f^2)/f=-f^{2p-1}+2f^{p-1}F^*(f)$, all follows from the whole discussion above. To prove the identity, observe that there is an element $\delta(f)=``\frac{\Phi^*(f)-f^p}{p} "$ such that $F^*(f)=f^p+p\delta(f)$. Therefore, we see
    \begin{align*}
    F^*(f^2)/f={\left(f^p+p\delta(f)\right)}^2 /f
    &=f^{2p-1}+2pf^{p-1}\delta(f)\\
    &=f^{2p-1}+2pf^{p-1}\frac{F^*(f)-f^p}{p}\\
    &=f^{2p-1}+2f^{p-1}(F^*(f)-f^p)\\
    &=f^{2p-1}-2f^{2p-1}+2f^{p-1}F^*(f)\\
    &=-f^{2p-1}+2f^{p-1}F^*(f).
    \end{align*}
\end{proof}

We finish with a simple example of a Frobenius lifting on $\mathbb{P}^N_{W_2(k)}$.

\begin{example}\label{frobenius lifting}
    Let $S = W_2(k)[x_0, \ldots, x_N]$ be the standard graded ring over $W_2(k)$. Therefore, we have $\op{Proj} S=\mathbb{P}^N_{W_2(k)}$. 

    Consider a unique ring morphism $\Phi:S \to S$ given by the canonical lifting of Frobenius on coefficients $W_2(k)$, i.e. $(a,b)\mapsto (a^p,b^p)$ in Witt coordinates, and by $x_i \mapsto x_i^p$ on variables. This $\Phi$ induces a Frobenius lifting $F$ on $\mathbb{P}^N_{W_2(k)}$.
\end{example}

\section{Dwork hypersurfaces}
The goal of this section is to compute examples of canonical liftings modulo $p^2$. We perform it for a Dwork family of hypersurfaces, because they are highly symmetrical and this can be exploited to decrease the amount of computation required significantly.

Let $k$ be a perfect field of characteristic $p>2$.  Let $W_2(k)$ be Witt vectors of length two over $k$. Let $N\ge 2$.

\begin{defin}
    Let $A$ be a ring. A \emph{Dwork hypersurface} (over $A$)  for a parameter $\lambda\in A$ is a hypersurface cut out by the equation 
\[ 
    \lambda \left( x_0^{N+1} +\cdots +x_{N}^{N+1}\right) = (N+1) x_0\cdot \ldots \cdot x_{N} 
\]
in $\mathbb{P}^{N}_{A}=\op{Proj}(A[x_0, \ldots, x_N])$, where $A[x_0, \ldots, x_N]$ is the standard graded ring over $A$.

If $A=k$, then we denote this hypersurface by $X_0(\lambda)$. If $A=W_2(k)$, then we denote this hypersurface by $X(\lambda)$.
\end{defin}

\begin{remark}
    Clearly, given $N$ fixed, Dwork hypersurfaces over $A$ form an affine line family of projective hypersurfaces. This family admits an extension to a projective line family whose point at the infinity is a Fermat hypersurface cut out by the equation $x_0^{N+1} +\cdots +x_{N}^{N+1}$. Over $A=k$, whenever this hypersurface is ordinary and smooth, then it admits a canonical lifting modulo $p^2$. It is again a Fermat hypersurface cut out by the same equation, but over $W_2(k)$. This follows from exactly the same calculations as in this section, but simpler. As such, this computation can be treated as an exercise.
\end{remark}

The computation is performed in the following way. We begin with an ordinary, smooth Dwork hypersurface $X_0=X_0(\lambda)$. We take its flat lifting $X=X(\eta)$ over $W_2(k)$. (In particular, $\eta \equiv\lambda \ (\text{mod} \ p)$.) The Hodge spectral sequence of $X$ degenerates by \cite[SGA7 Vol. II Expose XI Theoreme 1.5]{sga7}. Thus, by Section \ref{section2}, we know that $X$ is the canonical lifting modulo $p^2$ of $X_0$ if and only if the crystalline Frobenius is compatible with the Hodge filtration of $X$. This is equivalent to the map $\gamma$ being zero. And this, by Corollary \ref{obstruction}, Lemma \ref{redarrow}, and Proposition \ref{frobenius}, happens if and only if an image of an explicit arrow between explicit modules of homogeneous polynomials is zero. However, all of the maps involved are equivariant under linear symmetries of $X$. (These are introduced below.) 
Consequently, after a small argument, this arrow is zero if and only if it is zero on invariant elements of its source. The module of invariant elements is cyclic, and we will provide its explicit generator. Therefore, it is enough to compute for each $\eta$ the image of the generator is zero. This will turn out to be controlled by a single linear equation.

So, the computation is straightforward, however it still requires preparations.

\subsection{Symmetries}\label{symmetries}
Fix $N\ge 2$. Consider the following two groups of isomorphisms of $S_1$, where $P=\mathbb{P}^{N}_{A}=\op{Proj}(S=A[x_0, \ldots, x_N])$:
\begin{itemize}
    \item $G_1 \quad : \quad$ permutations of variables $x_i$,
    \item $G_2 \quad : \quad$ maps given by $x_i\mapsto \lambda_i x_i$, where $\lambda_i\in A$, $\lambda_i^{N+1}=1$, and $\lambda_0 \cdot \lambda_1 \cdot \ldots \cdot \lambda_N=1$.
\end{itemize}
Each of these groups induces a group of isomorphisms of $P$. Clearly, all of them preserve all Dwork hypersurfaces. Moreover, these groups intersect trivially, and $G_1$ naturally acts on $G_2$. Therefore, the group generated by both of them, we call it $G$, is their semiproduct. In particular, it is a finite group. This leads to the following lemma.

\begin{lemma}\label{order of G}
    Let $A$ be $k$ or $W_2(k)$.
    If $p > N+1$, then the order of $G$ is not divisible by $p$.
\end{lemma}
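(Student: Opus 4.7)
The plan is to bound $|G|$ above by an explicit integer coprime to $p$. Since $G = G_2 \rtimes G_1$ is a semidirect product, we have $|G| = |G_1| \cdot |G_2|$, so it suffices to verify that neither factor is divisible by $p$.

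First I would handle $G_1$: it is the symmetric group on the $N+1$ variables $x_0, \ldots, x_N$, so $|G_1| = (N+1)!$. The hypothesis $p > N+1$ immediately gives $p \nmid (N+1)!$ since every prime factor of $(N+1)!$ is at most $N+1$.

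Next I would handle $G_2$: each of its elements is determined by an $(N+1)$-tuple $(\lambda_0, \ldots, \lambda_N)$ of $(N+1)$-th roots of unity in $A$ with product equal to $1$. Since $p > N+1$, the polynomial $X^{N+1} - 1$ is separable over $k$, so the group $\mu_{N+1}(k)$ of $(N+1)$-th roots of unity in $k$ has order dividing $N+1$; by Hensel's lemma (which applies because the derivative $(N+1)X^{N}$ is a unit at every such root when $p \nmid N+1$), these roots lift uniquely to $\mu_{N+1}(W_2(k))$, so the same bound holds when $A = W_2(k)$. Consequently $|G_2|$ divides $|\mu_{N+1}(A)|^{N+1}$, which divides $(N+1)^{N+1}$, and in particular is coprime to $p$.

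Combining, $|G| = |G_1| \cdot |G_2|$ divides $(N+1)! \cdot (N+1)^{N+1}$, which contains no factor of $p$ under the standing assumption $p > N+1$. There is really no hard step here; the only mild subtlety is keeping $A = k$ and $A = W_2(k)$ on equal footing, and this is dispatched by Hensel's lemma for roots of unity of order coprime to $p$.
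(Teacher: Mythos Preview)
Your proof is correct and follows essentially the same route as the paper: split $|G|=|G_1|\cdot|G_2|$ via the semidirect product, bound $|G_1|=(N+1)!$ and $|G_2|\mid (N+1)^{N+1}$, and conclude $p\nmid |G|$. The only difference is cosmetic: the paper asserts without further comment that $\mu_{N+1}(A)$ is cyclic of order dividing $N+1$, whereas you justify the $A=W_2(k)$ case explicitly via Hensel's lemma.
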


\begin{proof}
    We assume that $A$ is $k$ or $W_2(k)$ only to control a group of roots of unity. It is not an essential assumption.

    Let $p>N+1$, then the group of $(N+1)$-roots of unity is a subgroup of a cyclic group of order $N+1$. Then, the group $G_2$ is a subgroup of a group generated by maps given by $x_i\mapsto \lambda_i x_i$, where $\lambda_i\in A$ and $\lambda_i^{N+1}=1$. This group is of order dividing ${(N+1)}^{N+1}$. Therefore, the same is true for $G_2$. At the same time, the order of $G_1$ is $(N+1)!$. Consequently, as $G$ is a semiproduct of $G_1$ and $G_2$, we have that the order of $G$ divides $(N+1)!{(N+1)}^{N+1}$ and this number is not divisible by $p$.
\end{proof}

We will need a partial information about a ring of invariant elements under $G$ action.

\begin{lemma}\label{G-invariants}
Assume that $A$ admits a primitive $N+1$-root of unity $\zeta$.
Then every $G$-invariant homogeneous polynomial in $S_{N+1}$ is of the form
$$a\sum_{i=0}^N x_i^{N+1}+b\prod_{i=0}^N x_i,$$
for some $a,b\in A$. 
\end{lemma}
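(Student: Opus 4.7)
The plan is to decouple the two group actions: first classify monomials in $S_{N+1}$ fixed by $G_2$, then impose $G_1$-symmetry on the resulting span. Since $S_{N+1}$ has a basis of monomials and $G_2$ acts diagonally on this basis, the $G_2$-invariant subspace is spanned by the $G_2$-fixed monomials.

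First I would identify the $G_2$-fixed monomials. The group $G_2$ is the kernel of the product map $(\mu_{N+1})^{N+1}\to \mu_{N+1}$, where we use $\zeta$ to see $\mu_{N+1}\subseteq A$. A monomial $x_0^{a_0}\cdots x_N^{a_N}$ is sent by $(\lambda_i)$ to $\prod \lambda_i^{a_i}\cdot x_0^{a_0}\cdots x_N^{a_N}$, so it is $G_2$-fixed iff the character $\chi_a\colon (\lambda_i)\mapsto \prod \lambda_i^{a_i}$ of $(\mu_{N+1})^{N+1}$ vanishes on $G_2$. By Pontryagin duality this happens exactly when $\chi_a$ factors through the product map, i.e.\ when there is an integer $c$ with $a_i\equiv c\pmod{N+1}$ for every $i$.

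Next I would enumerate the non-negative integer solutions of the system
\[
a_0+\cdots+a_N=N+1,\qquad a_i\equiv c\pmod{N+1}\text{ for all }i,\qquad 0\le c\le N.
\]
Writing $a_i=c+k_i(N+1)$ with $k_i\ge 0$, the sum becomes $(N+1)(c+\sum k_i)=N+1$, hence $c+\sum k_i=1$. The only possibilities are $c=0$ with exactly one $k_i=1$, giving the monomials $x_0^{N+1},\ldots,x_N^{N+1}$, or $c=1$ with all $k_i=0$, giving the monomial $x_0x_1\cdots x_N$. Therefore the $G_2$-invariants in $S_{N+1}$ form the $(N+2)$-dimensional span of $\{x_i^{N+1}\}_{i=0}^N$ and $\prod_i x_i$.

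Finally I would impose the $G_1$-action: $G_1$ permutes the $x_i^{N+1}$ transitively and fixes $\prod_i x_i$. Taking the $G_1$-invariant subspace of the span computed above collapses the first $N+1$ generators onto their sum, leaving exactly the two-dimensional space generated by $\sum_i x_i^{N+1}$ and $\prod_i x_i$, which is the claim. The only mildly subtle point is the character-theoretic step for $G_2$; everything else is a short combinatorial case-check and an averaging argument, so I do not expect any real obstacle.
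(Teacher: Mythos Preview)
Your argument is correct. Both you and the paper exploit that $G_2$ acts diagonally on the monomial basis of $S_{N+1}$, but the order of operations is reversed: the paper first passes to $G_1$-invariants (orbit-sums indexed by partitions of $N+1$) and then uses an explicit element $x_k\mapsto\zeta x_k$, $x_N\mapsto\zeta^{-1}x_N$ of $G_2$ to kill every orbit-sum except the two extremal ones, whereas you first determine the $G_2$-fixed monomials via characters and only afterwards symmetrise under $G_1$. Your ordering is a little cleaner: the $G_2$-step becomes the short Diophantine enumeration $\sum a_i=N+1$, $a_i\equiv c\pmod{N+1}$, and the subsequent $G_1$-step is trivial; the paper's ordering instead needs the additional (easy) remark that distinct $G_1$-orbit-sums have disjoint monomial support so that $G_2$-invariance can be tested orbit by orbit. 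One small point you gloss over (and the paper does too): the implication ``$c\cdot(\zeta^m-1)=0\Rightarrow c=0$'' for $0<m<N+1$ requires $\zeta^m-1$ to be a nonzerodivisor in $A$, which is automatic in the cases actually used ($A=k$ or $A=W_2(k)$ with $p\nmid N+1$).
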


\begin{proof}
    Of course, any such polynomial is $G$-invariant.
    
    Let $P$ be an element of $S_{N+1}^G$.
    $G_1$ is a subgroup of $G$, thus $P$ is $G_1$-invariant. This means that $P$ is a linear combination of symmetric polynomials. They are in bijection with monomials $m(e)\coloneqq x_0^{e_1}x_1^{e_2}\ldots x_N^{e_N}$, where $e_1\ge e_2\ge\ldots \ge e_N \ge 0$ and $\sum_{i=0}^{N} e_i=N+1$ by the operation $m(e)\mapsto \sum'_{g\in G_1} g(m(e))$, where the prime means that we sum each element of the orbit once. In particular, we have $m(N+1,0,\ldots,0) \mapsto \sum_{i=0}^N x_i^{N+1}$ and $m(1,1,\ldots,1)\mapsto \prod_{i=0}^N x_i$. Hereafter, we also use $m(e)$ to denote the polynomial it correspond to. So, we have $P=\sum_e a(e) m(e)$, where $a(e)\in A$. We are going to show that $a(e)=0$ unless $e$ is $(N+1,0,\ldots,0)$, or $(1,1,\ldots,1)$.

    First, observe that if a sum $\sum_e a(e) m(e)$ is $G_2$-invariant, then we must have for each $e$ that $g(m(e))=m(e)$ for each $g\in G_2$, or $a(e)=0$. Indeed, it follows from a simple observation that both polynomials $g(m(e))$ and $m(e)$ are spanned on the same monomials.

    Let $e\ne (N+1,0,\ldots,0), (1,1,\ldots,1)$. Let $0<k<N$ be any index such that $e_k\ne 0$. Then $0<e_k<N+1$.
    Consider $g\in G_2$ that is determined by $x_k \mapsto \zeta x_k$ and $x_{N}\mapsto \zeta^{-1}x_{N}$. Then, if we have an equality of polynomials $g(m(e))=m(e)$, then the coefficient next to the monomial $m(e)$ is $\zeta^{e_k}$, but it should be one. So, $\zeta^{e_k}=1$, but $0<e_k<N+1$. This is a contradiction with the assumption that $\zeta$ is primitive.
\end{proof}

\subsection{Smoothness} By a Nakayama lemma, e.g. \cite[Proposition B.3]{Gortz--Wedhorn}, a Dwork hypersurface $X(\eta)$ is smooth over $W_2(k)$ if and only if $X_0(\eta \ (\op{mod} \ p))$ is smooth.

\begin{lemma}\label{smooth}
    Let $X_0(\lambda)$ be a Dwork hypersurface. The hypersurface $X_0(\lambda)$ is smooth if and only if
    $\lambda^{N+1} \ne 1$, $\lambda \ne 0$, and $p$ does not divide $N+1$.
\end{lemma}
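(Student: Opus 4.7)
My approach would be the Jacobian criterion for smoothness of projective hypersurfaces. First I would compute
\[
  \frac{\partial f}{\partial x_i} = (N+1)\left(\lambda x_i^N - \prod_{j\ne i} x_j\right)
\]
for $f = \lambda(x_0^{N+1} + \cdots + x_N^{N+1}) - (N+1)\,x_0 \cdots x_N$. From this formula the necessity of the three conditions is immediate. If $p \mid N+1$, every partial derivative vanishes identically, so every point of $X_0(\lambda)$ is singular (whenever $f \not\equiv 0$). If $\lambda = 0$ and $p \nmid N+1$, then up to a unit $f = x_0 x_1 \cdots x_N$ is a reducible union of coordinate hyperplanes, singular along every pairwise intersection, e.g.\ at $(1:1:0:\cdots:0)$. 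And if $\lambda^{N+1} = 1$ with $\lambda \ne 0$ and $p \nmid N+1$, I would verify by direct substitution that $(\lambda : 1 : \cdots : 1)$ lies on $X_0(\lambda)$ and annihilates every $\partial f / \partial x_i$: indeed $f(\lambda,1,\ldots,1) = \lambda(\lambda^{N+1} + N) - (N+1)\lambda = 0$, and each Jacobian equation $\lambda x_i^N = \prod_{j\ne i}x_j$ reduces either to $\lambda \cdot \lambda^N = 1$ (which is $\lambda^{N+1}=1$) or to $\lambda = \lambda$.

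For the converse, I would assume $p \nmid N+1$, $\lambda \ne 0$, and $\lambda^{N+1} \ne 1$, and show there are no singular points. A singular point must satisfy $\lambda x_i^N = \prod_{j\ne i} x_j$ for every $i$. Multiplying all $N+1$ of these equations together gives
\[
  \lambda^{N+1} \prod_{i} x_i^N \;=\; \prod_{i} \prod_{j\ne i} x_j \;=\; \prod_{j} x_j^N,
\]
so $(\lambda^{N+1} - 1) \prod_i x_i^N = 0$. Under the hypothesis $\lambda^{N+1}\ne 1$ this forces some coordinate to vanish; after relabeling by the $G_1$-symmetry of Section \ref{symmetries} we may assume $x_0 = 0$. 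Then for each $i \ne 0$ the right-hand side $\prod_{j\ne i} x_j$ still contains $x_0$, hence $\lambda x_i^N = 0$, and since $\lambda \ne 0$ we conclude $x_i = 0$. Thus all coordinates vanish, contradicting the requirement that a singular point be an actual point of $\mathbb{P}^N_k$.

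The computations are elementary and I do not expect any serious obstacle. The only minor subtlety is the genuinely degenerate case $p \mid N+1$ together with $\lambda = 0$, where $f$ becomes the zero polynomial and $X_0(\lambda)$ is not a hypersurface in the usual sense; this is consistent with the stated conditions, which exclude such $\lambda$ from the ``smooth'' regime.
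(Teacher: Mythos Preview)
Your argument is correct and follows exactly the approach the paper indicates: the paper's proof consists of the single sentence ``It is a simple application of the Jacobian criterion,'' and you have carried out precisely that application, supplying all the elementary details the paper omits.
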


\begin{proof}
    It is a simple application of the Jacobian criterion, see e.g. \cite[I, Theorem 5.1]{Hartshorne}.
\end{proof}

\begin{remark}
    The Fermat hypersurface is smooth if and only if $p$ does not divide $N+1$.
\end{remark}

\subsection{Hasse--Dwork Polynomials} This section introduces a family of polynomials that play an essential role in computing ordinarity of Dwork hypersurfaces, what is analogical to computing a Hasse invariant, see \cite[IV, Proposition 4.21]{Hartshorne}, and, in this paper, computing canonical liftings modulo $p^2$. Therefore, we call them Hasse--Dwork polynomials.

\begin{defin}\label{hasse--dwork}
    We define a Hasse--Dwork polynomial for natural numbers $P,M$ to be the following polynomial:
    
 \[
 \mathbb{H}\mathbb{D}_M^{P}(X)={(-M)}^{P}\left(1+M!\sum_{i=1}^{\left[\frac{P}{M}\right] } {{P}\choose{M \times i}}  {\left(\frac{X}{-M}\right)}^{iM}\right),
 \]
 where $M!{{P}\choose{M \times i}}=
{{P}\choose{i}}{{P-i}\choose{i}}{{P-2i}\choose{i}}\ldots{{P-(M-1)i}\choose{i}}$, i.e. the number ${{P}\choose{M \times i}}$ is the number of all possible unordered
 collections of $M$ disjoint subsets of the set $\{1,\ldots, P\}$, each of the cardinality $i$.
\end{defin}

The following lemma explains their role.

\begin{lemma}\label{role of hasse-dwork}
An evaluation of a Hasse-Dwork polynomial $\mathbb{H}\mathbb{D}_{N+1}^P(\lambda)$ is the coefficient next to monomial
$\prod_{i=0}^N x_i^{P}$ in the polynomial
${\left(\lambda\left(\sum_{i=0}^N x_i^{N+1}\right)-\left(N+1\right)\prod_{i=0}^N x_i\right)}^{P}$.
\end{lemma}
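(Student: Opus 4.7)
The plan is a direct combinatorial expansion. I would apply the binomial theorem to
\[
\left(\lambda\sum_{j=0}^N x_j^{N+1} - (N+1)\prod_{j=0}^N x_j\right)^{P}
= \sum_{k=0}^{P}\binom{P}{k}\lambda^{k}\left(\sum_{j=0}^N x_j^{N+1}\right)^{k}(-(N+1))^{P-k}\left(\prod_{j=0}^{N}x_j\right)^{P-k},
\]
and then identify, for each $k$, the contribution of the factor $(\sum x_j^{N+1})^{k}$ to the monomial $\prod_j x_j^{P}$. Since the factor $(\prod x_j)^{P-k}$ contributes degree $P-k$ to every variable, I need each $x_j$ to appear in total degree exactly $k$ in the remaining factor. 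As $(\sum x_j^{N+1})^{k}$ is a polynomial whose monomials are products of $x_j^{N+1}$, this forces $k$ to be a multiple of $N+1$, say $k = i(N+1)$, and each $x_j^{N+1}$ must occur exactly $i$ times in the chosen factorization.

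The multinomial theorem then gives the coefficient of $\prod_j x_j^{(N+1)i}$ in $(\sum_{j} x_j^{N+1})^{i(N+1)}$ as $\frac{(i(N+1))!}{(i!)^{N+1}}$. Collecting everything, the coefficient of $\prod_j x_j^{P}$ equals
\[
\sum_{i=0}^{[P/(N+1)]}\binom{P}{i(N+1)}\frac{(i(N+1))!}{(i!)^{N+1}}\lambda^{i(N+1)}(-(N+1))^{P-i(N+1)}
= \sum_{i=0}^{[P/(N+1)]}\frac{P!}{(i!)^{N+1}(P-i(N+1))!}\lambda^{i(N+1)}(-(N+1))^{P-i(N+1)}.
\]

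To finish I would reconcile this with the definition of $\mathbb{H}\mathbb{D}^{P}_{N+1}(\lambda)$. The key combinatorial identity is the simplification
\[
(N+1)!\binom{P}{(N+1)\times i}
=\binom{P}{i}\binom{P-i}{i}\cdots\binom{P-Ni}{i}
=\frac{P!}{(i!)^{N+1}(P-(N+1)i)!},
\]
which follows by telescoping the product of binomials. Substituting this into the definition of $\mathbb{H}\mathbb{D}^{P}_{N+1}(\lambda)$ (and absorbing the leading $(-(N+1))^{P}$ into the sum via $(\lambda/(-(N+1)))^{i(N+1)}$) yields exactly the expression computed above, with the $1$ in front of the sum corresponding to the $i=0$ term.

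No step is genuinely hard. The only mild obstacle is notational bookkeeping around the unordered set-partition interpretation of $(N+1)!\binom{P}{(N+1)\times i}$; once one unwinds it to the explicit factorial formula, matching both sides is mechanical.
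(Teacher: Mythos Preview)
Your proposal is correct and follows essentially the same approach as the paper: a direct combinatorial expansion of the $P$-th power to isolate the coefficient of $\prod_j x_j^{P}$. The only presentational difference is that the paper counts directly in terms of the unordered set-partition quantity $\binom{P}{(N{+}1)\times i}$ (so the result matches the definition of $\mathbb{H}\mathbb{D}$ immediately), whereas you pass through the binomial and multinomial theorems and then invoke the telescoping identity $(N{+}1)!\binom{P}{(N{+}1)\times i}=P!/\bigl((i!)^{N+1}(P-(N{+}1)i)!\bigr)$ to reconcile; the content is the same.
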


\begin{proof}
    One can compute the coefficient in the following way.
    Let $i=0,\ldots, \left[\frac{P}{N+1}\right]$.
    Choose $(N+1)\cdot i$ factors out of $P$ that we have.
    Then, decompose it into $i$ sets of size $N+1$ each. Denote them by $A_0,\ldots,A_N$. Take the summand $\lambda x_i^{N+1}$ from the factors from $A_i$. From the factors outside the one chosen initially we take the summand $-\left(n+1\right)\prod_{i=0}^N x_i$. The product of all taken summands contributes to the coefficient, and the coefficient is the sum of all these contributions under all the choices. Namely:

    \begin{align*}
    \left({(-(N+1))}^{P}+\sum_{i=1}^{\left[\frac{P}{N+1}\right] } {{P}\choose{(N+1) \times i}}(N+1)!
    {\left(\lambda\right)}^{i(N+1)}
    {(-(N+1))}^{P-i(N+1)} \right)=\\
    ={(-(N+1))}^{P}\left(1+(N+1)!\sum_{i=1}^{\left[\frac{P}{N+1}\right] } {{P}\choose{(N+1) \times i}}  {\left(\frac{\lambda}{-(N+1)}\right)}^{i(N+1)}\right)=\mathbb{H}\mathbb{D}_{N+1}^{P}(\lambda). \quad \qedhere
\end{align*}
\end{proof}

Consequently, Hasse--Dwork polynomials have integral coefficients, so we can consider their reductions modulo $p$ and $p^2$. We recall a Katz's result \cite[2.3.7.18]{KatzAlgSoln} about a modulo $p$ case first. (Observe that that paper's notations and ours differ with respect where we put $\lambda$ in the defining equation of a Dwork hypersurface. This is why his formula is not the same as the one below, but they are equivalent.)

 \begin{prop}[Katz's formula]
 Let $p$ not divide $N+1$. Then, we have a congruence:
 $$
 \mathbb{H}\mathbb{D}_{N+1}^{p-1}(X)\equiv\sum_{i=0}^{\left[\frac{p-1}{N+1}\right]}
\frac{\prod_{j=1}^{N}\{\frac{j}{N+1}\}_i}{{(i!)}^{N}} X^{i(N+1)} \ \left(mod \ p\right),
$$
where $\{a\}_i\coloneqq a(a+1)(a+2)\ldots(a+i-1)$ for $i\ge 1$, and $\{a\}_0\coloneqq 1$.
 \end{prop}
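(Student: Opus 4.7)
The plan is to extract the coefficient of $X^{i(N+1)}$ in $\mathbb{HD}_{N+1}^{p-1}(X)$ directly from Definition \ref{hasse--dwork} and then reduce it modulo $p$ into the hypergeometric form on the right-hand side. Unwinding the definition, this coefficient equals
\[
c_i = (-(N+1))^{p-1-i(N+1)}\prod_{k=0}^{N}\binom{p-1-ki}{i}.
\]
Since $p$ is odd and $p\nmid N+1$, Fermat's little theorem gives $(-(N+1))^{p-1}\equiv 1\pmod p$, so the leading prefactor is congruent to $(-1)^{i(N+1)}(N+1)^{-i(N+1)}$.

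Next I would reduce each binomial using $p-j\equiv -j\pmod p$, obtaining
\[
\binom{p-1-ki}{i}\equiv \frac{(-1)^i(ki+1)(ki+2)\cdots(ki+i)}{i!}\pmod p.
\]
Taking the product over $k=0,\dots,N$, the signs accumulate to $(-1)^{i(N+1)}$ and cancel the sign from the prefactor, while the numerator telescopes: the consecutive blocks $(ki+1)(ki+2)\cdots((k+1)i)$ concatenate to $((N+1)i)!$. Thus
\[
c_i \equiv \frac{((N+1)i)!}{(N+1)^{i(N+1)}(i!)^{N+1}}\pmod p.
\]

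What remains is the purely arithmetic identity
\[
\frac{\prod_{j=1}^{N}\{j/(N+1)\}_i}{(i!)^{N}} = \frac{((N+1)i)!}{(N+1)^{i(N+1)}(i!)^{N+1}},
\]
after which the claim is immediate. To verify it I would clear the formal denominator in each Pochhammer symbol by writing $\{j/(N+1)\}_i = (N+1)^{-i}\prod_{k=0}^{i-1}(j+k(N+1))$, and then regroup by $k$. The inner product $\prod_{j=1}^{N}(k(N+1)+j)$ is turned into $((k+1)(N+1))!/((k(N+1))!\cdot(k+1)(N+1))$ by inserting the missing factor $(k+1)(N+1)$; the factorials then telescope across $k=0,\dots,i-1$ to $((N+1)i)!$ while the inserted factors contribute $(N+1)^{i}\,i!$. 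The main obstacle is simply careful bookkeeping, since three independent sources of $(N+1)$-powers (the Pochhammer normalization, the explicit prefactor in $\mathbb{HD}$, and the telescoping remainder) must be aligned correctly. No input deeper than Fermat's little theorem is required.
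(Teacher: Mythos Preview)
Your argument is correct. The coefficient extraction, the reduction of each binomial $\binom{p-1-ki}{i}$ modulo $p$, the telescoping to $((N+1)i)!$, and the final Pochhammer identity are all sound; the only point worth noting is that the range $i\le\lfloor (p-1)/(N+1)\rfloor$ ensures $(k+1)i\le p-1$ for every $k\le N$, so none of the integers appearing in the numerator are multiples of $p$ and the reductions are legitimate.

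As for the comparison: the paper does not actually prove this proposition. It is quoted as a known result of Katz and merely cited. Your write-up thus supplies a self-contained elementary proof where the paper gives none. It is also worth observing that your computation is precisely the content of the ``elementary transformations'' that the paper invokes without detail in the proof of the mod $p^2$ extension (Proposition~\ref{extended katz formulas}); there the key step is reduced to the congruence $\binom{mp-1}{i(N+1)}\equiv (1-mpH_{i(N+1)})(-1)^{i(N+1)}\pmod{p^2}$, and the passage from that binomial congruence to the hypergeometric form on the right-hand side is exactly the Pochhammer bookkeeping you carried out.
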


We have the following extension of this formula modulo $p^2$.

\begin{prop}\label{extended katz formulas}
 Let $p$ not divide $N+1$. 
 Let $m=1,2$. We have congruences:
$$
\mathbb{H}\mathbb{D}_{N+1}^{mp-1}(X)
\equiv{\left(-(N+1)\right)}^{mp-1}\sum_{i=0}^{\left[\frac{mp-1}{N+1}\right]}
\frac{\left(1-mpH_{i(N+1)}\right)\prod_{j=1}^{N}\{\frac{j}{N+1}\}_i}{{(i!)}^{N}} X^{i(N+1)} \ \left(mod \ p^2\right),
$$
where $\{a\}_i\coloneqq a(a+1)(a+2)\ldots(a+i-1)$ for $i\ge 1$, $\{a\}_0\coloneqq 1$ and $H_k\coloneqq 1+\frac{1}{2}+\frac{1}{3}+\ldots+\frac{1}{k}$ is the harmonic sequence. For $i(N+1)\ge p$, we have $pH_{i(n+1)}=1+p(H_{i(n+1)}-\frac{1}{p})$.
\end{prop}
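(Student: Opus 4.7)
My plan is to extract coefficients of $X^{i(N+1)}$ from both sides and reduce the proposition to a single mod-$p^2$ identity for the product $\prod_{l=1}^{r}(mp-l)$, where $r=i(N+1)$. Definition \ref{hasse--dwork} rewrites
\[
(N+1)!\binom{mp-1}{(N+1)\times i} = \prod_{k=0}^{N}\binom{mp-1-ki}{i} = \frac{1}{(i!)^{N+1}}\prod_{l=1}^{r}(mp-l).
\]
In parallel, expanding $\{j/(N+1)\}_i=(N+1)^{-i}\prod_{s=0}^{i-1}(j+s(N+1))$ and noting that the pairs $(j,s)$ with $1\le j\le N$, $0\le s\le i-1$ enumerate precisely the integers in $[1,r]$ not divisible by $N+1$ (whereas the $i$ multiples of $N+1$ in $[1,r]$ contribute $(N+1)^i\cdot i!$), I obtain the combinatorial identity
\[
\prod_{j=1}^{N}\{j/(N+1)\}_i = \frac{r!}{(N+1)^r\cdot i!}.
\]
Since $i<p$ makes $i!$ a $p$-adic unit, substituting these two identities and cancelling the common factor $(-(N+1))^{mp-1}/(i!)^{N+1}$ reduces the proposition, for each $i\ge 1$ (the case $i=0$ being trivial), to
\[
\prod_{l=1}^{r}(mp-l)\equiv (-1)^r\cdot r!\cdot(1-mpH_r)\pmod{p^2}.
\]

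I would then prove this congruence by splitting the product according to divisibility by $p$. For indices $l$ with $p\nmid l$, the element $1/l$ is a $p$-adic integer, $mp-l=-l(1-mp/l)$, and $(mp/l)^2\in p^2\mathbb{Z}_p$, so the partial product contributes $(-1)^{r'}\bigl(\prod_{p\nmid l}l\bigr)\bigl(1-mp\sum_{p\nmid l}1/l\bigr)\pmod{p^2}$, where $r'$ counts the relevant $l$'s. For $m=1$, the bound $r\le p-1$ forces every index to be coprime to $p$ and produces the desired expression $(-1)^r r!(1-pH_r)$ directly. The same argument handles the subcase $m=2$, $r<p$.

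The only genuinely subtle case, and therefore the main obstacle, is $m=2$ with $p\le r\le 2p-1$. Here the unique $p$-divisible index $l=p$ contributes the factor $2p-p=p$. Pulling it out and applying the expansion above to the remaining indices yields
\[
\prod_{l=1}^{r}(2p-l)\equiv (-1)^{r-1}\,r!\,\bigl(1-2p(H_r-1/p)\bigr)\pmod{p^2},
\]
while the proposition's convention $pH_r=1+p(H_r-1/p)$ gives, by direct algebraic expansion,
\[
(-1)^r r!(1-2pH_r)=(-1)^{r-1}r!\bigl(1+2p(H_r-1/p)\bigr).
\]
To reconcile these two expressions modulo $p^2$, I use the valuation computation $v_p(r!)=1$ (valid since $p\le r<2p$) together with $H_r-1/p\in\mathbb{Z}_p$: this forces $r!\cdot p(H_r-1/p)\in p^2\mathbb{Z}_p$, so both sides of the core congruence collapse to $(-1)^{r-1}r!\pmod{p^2}$. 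This finishes the proof.
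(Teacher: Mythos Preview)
Your proof is correct and follows essentially the same route as the paper's: both extract the coefficient of $X^{i(N+1)}$, use the combinatorial identity $\prod_{j=1}^{N}\{j/(N+1)\}_i = r!/\bigl((N+1)^{r}\,i!\bigr)$ with $r=i(N+1)$ (the paper hides this under ``elementary transformations''), and reduce everything to a single mod-$p^{2}$ congruence for $\binom{mp-1}{r}$ or, equivalently after multiplying by $r!$, for $\prod_{l=1}^{r}(mp-l)$. The only cosmetic difference is that the paper states the target congruence as $\binom{mp-1}{r}\equiv(-1)^{r}(1-mpH_{r})$ and dismisses it with ``simple elementary calculations, some in the form of an induction'', together with the observation that $\prod_{j=1}^{N}\{j/(N+1)\}_i\equiv 0\pmod p$ once $r\ge p$; you instead give a direct, fully explicit argument via the factorization $mp-l=-l(1-mp/l)$ and a careful $p$-adic valuation count in the case $m=2$, $p\le r\le 2p-1$. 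Your version is more transparent and self-contained than the paper's sketch, but the underlying strategy is the same.
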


\begin{proof}
    We have to show that
    $$(N+1)!{{mp-1}\choose{(N+1) \times i}}  {\left(\frac{1}{-(N+1)}\right)}^{i(N+1)}
    \equiv 
    \frac{
\left(1-mpH_{i(N+1)}\right)\prod_{j=1}^{N}\{\frac{j}{N+1}\}_i}{{(i!)}^{N}} \ \left(mod \ p^2\right).$$

By elementary transformations, this is equivalent to
\[
{{mp-1}\choose{i(N+1)}}\equiv \left(1-mpH_{i(N+1)}\right){(-1)}^{i(N+1)} \ \left(mod \ p^2\right).
\]
This is true due to simple elementary calculations, some in a form of an induction, and an observation that $\prod_{j=1}^{N}\{\frac{j}{N+1}\}_i$ is a multiple of $p$ for $i(N+1)\ge p$.
\end{proof}

This extension can be used to get a curious result.

\begin{cor}
    Let $\eta\in W_2(k)$. Let $\lambda\coloneqq \eta \ (\op{mod} \ p )$. Let $p$ not divide $N+1$. If $\mathbb{H}\mathbb{D}_{N+1}^{p-1}(\lambda)\ne 0$, then the fraction $\frac{\mathbb{H}\mathbb{D}_{N+1}^{p-1}(\eta)}{\mathbb{H}\mathbb{D}_{N+1}^{2p-1}(\eta)}$ is well defined and it depends only on $\lambda$.
\end{cor}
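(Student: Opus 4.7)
The plan is to split the argument into two parts: first, verifying that the denominator $\mathbb{H}\mathbb{D}_{N+1}^{2p-1}(\eta)$ is a unit in $W_2(k)$; and second, showing that the value of the ratio does not depend on the choice of lift $\eta$ of $\lambda$.

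For well-definedness, I would reduce the congruence of Proposition \ref{extended katz formulas} modulo $p$. Using the observation made in the proof of that proposition that $\prod_{j=1}^{N} \{\frac{j}{N+1}\}_i$ is divisible by $p$ whenever $i(N+1)\geq p$, all terms in the sum with $i(N+1)\geq p$ vanish modulo $p$. The remaining terms are identical for $m=1$ and $m=2$, yielding
\[
\mathbb{H}\mathbb{D}_{N+1}^{2p-1}(\lambda)\equiv {(-(N+1))}^{p}\,\mathbb{H}\mathbb{D}_{N+1}^{p-1}(\lambda)\pmod{p}.
\]
Since $p\nmid N+1$, the right-hand side is a unit in $k$ whenever $\mathbb{H}\mathbb{D}_{N+1}^{p-1}(\lambda)\ne 0$, and hence $\mathbb{H}\mathbb{D}_{N+1}^{2p-1}(\eta)\in W_2(k)^{\times}$, so the fraction makes sense.

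For independence of the lift, any two lifts of $\lambda$ differ by an element of $pW_2(k)$, so I may replace $\eta$ by $\eta+p\mu$ for some $\mu\in W_2(k)$. Using $(\eta+p\mu)^{i(N+1)}\equiv \eta^{i(N+1)}+i(N+1)\lambda^{i(N+1)-1}p\mu\pmod{p^2}$ and observing that the factor $(1-mpH_{i(N+1)})$ multiplies an already-$p$-divisible term — so the $-mpH_{i(N+1)}$ portion contributes $p^2$ times a well-defined $W_2(k)$-valued expression and vanishes — I obtain
\[
\mathbb{H}\mathbb{D}_{N+1}^{mp-1}(\eta+p\mu)\equiv A_m + {(-(N+1))}^{mp-1}\,p\mu\,V(\lambda)\pmod{p^2},
\]
where $A_m:=\mathbb{H}\mathbb{D}_{N+1}^{mp-1}(\eta)$ and $V(\lambda)$ depends only on $\lambda$, not on $m$. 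Denoting the second summand by $B_m$, invariance of the ratio reduces to the identity $A_2B_1\equiv A_1B_2\pmod{p^2}$. Since $B_m$ is divisible by $p$, this identity only involves $A_m\bmod p$, which by the first step factors as ${(-(N+1))}^{mp-1}$ times a common factor in $\lambda$; the sign-factor exponents match on both sides, and the identity follows.

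The proof is largely a routine bookkeeping modulo $p^2$. The only conceptual point — and what I expect to be the trickiest to formulate cleanly — is that both the leading modulo-$p$ reduction of $A_m$ and the first-order perturbation $B_m$ scale as ${(-(N+1))}^{mp-1}$ times an $m$-independent factor, which is precisely the structure Proposition \ref{extended katz formulas} provides, and this is what forces the $m$-dependence to cancel from the ratio.
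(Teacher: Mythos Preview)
Your proof is correct and follows essentially the same approach as the paper's. Both arguments hinge on the key observation from Proposition~\ref{extended katz formulas} that, modulo $p$, $\mathbb{H}\mathbb{D}_{N+1}^{mp-1}$ factors as ${(-(N+1))}^{mp-1}$ times an $m$-independent polynomial, which forces the cross-terms $A_1B_2$ and $A_2B_1$ to agree modulo $p^2$. The only cosmetic difference is that the paper fixes a decomposition $\eta=\lambda+p\lambda_1$ and computes the ratio directly as $A_1A_2/A_2^2$, whereas you phrase the same cancellation as invariance under $\eta\mapsto\eta+p\mu$; your tracking of the unit factor ${(-(N+1))}^{mp-1}$ is in fact slightly more explicit than the paper's claim that $A_1\equiv A_2$ and $B_1\equiv B_2\pmod{p}$.
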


\begin{proof}
    By Proposition \ref{extended katz formulas}, we can write $\mathbb{H}\mathbb{D}_{N+1}^{mp-1}(\eta)=A_m+pB_m$, where
    $\eta = \lambda+p\lambda_1$,
    $A_m=\sum_{i=0}^{\left[\frac{mp-1}{N+1}\right]}C_i^m \lambda^{i(N+1)}$,
    and
    $B_m=\sum_{i=0}^{\left[\frac{p-1}{N+1}\right]}C_i^m
i(N+1)\lambda^{i(N+1)-1}\lambda_1$, where \\
$C_i^m\coloneqq {\left(-(N+1)\right)}^{mp-1}\frac{\left(1-mpH_{i(N+1)}\right)\prod_{j=1}^{N}\{\frac{j}{N+1}\}_i}{{(i!)}^{N}}$. Moreover, we observe that $A_1 \equiv A_2$ and $B_1\equiv B_2$ modulo $p$, therefore $A_2$ is not zero modulo $p$ by the assumption $\mathbb{H}\mathbb{D}_{N+1}^{p-1}(\lambda)\ne 0$. This proves that the fraction is well defined.

The sole dependence on $\lambda$ follows from the following calculation.

$$\frac{\mathbb{H}\mathbb{D}_{N+1}^{p-1}(\eta)}{\mathbb{H}\mathbb{D}_{N+1}^{2p-1}(\eta)}=\frac{A_1+pB_1}{A_2+pB_2}=
\frac{(A_1+pB_1)(A_2-pB_2)}{A_2^2-{(pB_2)}^2}=
\frac{A_1 A_2+pA_2 B_1-pA_1 B_2}{A_2^2}
=\frac{A_1 A_2}{A_2^2}.
$$
\end{proof}

\subsection{Ordinarity}

The following is a part of \cite[2.3.7.18]{KatzAlgSoln}.

\begin{prop}\label{ordinary}
    Let $X_0(\lambda)$ be a smooth Dwork hypersurface. It is ($1$-)ordinary if and only if $\mathbb{H}\mathbb{D}_{N+1}^{p-1}(\lambda)\ne 0$.
\end{prop}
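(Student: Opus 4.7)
The plan is to explicitly compute the $p$-linear endomorphism $F^{*}$ of the one-dimensional $k$-vector space $H^{N-1}(X_{0}(\lambda),\cO_{X_{0}(\lambda)})$ using the machinery of Section \ref{section2}, and then identify the resulting scalar with $\mathbb{H}\mathbb{D}_{N+1}^{p-1}(\lambda)$. Since over a perfect field a nonzero $p$-linear endomorphism of a one-dimensional vector space is automatically bijective, this will establish the claim.

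First, by the identification in \ref{cohomology of OX} applied with $d=N+1=\deg f$, we have $H^{N-1}(X_{0},\cO_{X_{0}})\simeq\ker(f\colon S^{\vee}_{-(N+1)}\to S^{\vee}_{0})$. The target $S^{\vee}_{0}$ vanishes, since its basis would have to consist of monomials $\prod_{i}x_{i}^{a_{i}}$ with each $a_{i}<0$ and $\sum a_{i}=0$, of which there are none. Hence $H^{N-1}(X_{0},\cO_{X_{0}})=k\cdot\tfrac{1}{x_{0}\cdots x_{N}}$, a one-dimensional space.

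Second, I would establish the analog of Proposition \ref{frobenius} for $X$ itself rather than for $Y=V(f^{2})$: the endomorphism $F^{*}$ of $H^{n}(X,\cO_{X})\simeq S^{\vee}_{-d}$ is induced by $g\mapsto F^{*}(g)\cdot F^{*}(f)/f$. The argument is a verbatim translation of the proof of Proposition \ref{frobenius} with $f^{2}$ replaced by $f$: one considers the diagram of short exact sequences for $f$, $F^{*}(f)$, and $f$ again, dividing by $f$ to land back in the original twist. Over $k$ one has $F^{*}(f)=f^{p}$, so the formula collapses to $g\mapsto F^{*}(g)\cdot f^{p-1}$.

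Applying this to the basis element $\eta=\tfrac{1}{x_{0}\cdots x_{N}}$ gives $F^{*}(\eta)=\tfrac{f^{p-1}}{(x_{0}\cdots x_{N})^{p}}\in S^{\vee}_{-(N+1)}$. Expanding $f^{p-1}=\sum_{b}c_{b}\prod_{i}x_{i}^{b_{i}}$ with $\sum_{i}b_{i}=(N+1)(p-1)$, only terms with $b_{i}\le p-1$ for every $i$ survive the projection onto $S^{\vee}_{-(N+1)}$; combined with the fixed total degree, this forces $b_{i}=p-1$ for every $i$. Consequently $F^{*}(\eta)$ equals the coefficient of $(x_{0}\cdots x_{N})^{p-1}$ in $f^{p-1}$ times $\eta$, and by Lemma \ref{role of hasse-dwork} this coefficient is precisely $\mathbb{H}\mathbb{D}_{N+1}^{p-1}(\lambda)$. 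The only laborious step is the analog of Proposition \ref{frobenius} for $X$, but it is a direct adaptation of an already-established diagram chase, so no essential new difficulty arises.
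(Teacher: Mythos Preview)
Your argument is correct and follows the same route as the paper: the paper simply cites the Hasse-invariant computation in \cite[IV, Proposition~4.21]{Hartshorne} and Lemma~\ref{role of hasse-dwork}, while you spell out that computation explicitly by adapting the diagram of Proposition~\ref{frobenius} to $X$ over $k$ (where it simplifies, since $F^*(f)=f^p$ is trivially divisible by $f$). One cosmetic point: you use $\eta$ for the basis vector $\tfrac{1}{x_0\cdots x_N}$, which clashes with the paper's use of $\eta$ for the lifting parameter in $W_2(k)$; pick a different symbol.
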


\begin{proof}
    In this case, by a computation similar the one from \cite[IV Proposition 4.21]{Hartshorne}, being ordinary is equivalent to the coefficient next to the monomial
$\prod_{i=0}^N x_i^{p-1}$ in the polynomial
${\left(\lambda\left(\sum_{i=0}^N x_i^{N+1}\right)-\left(N+1\right)\prod_{i=0}^N x_i\right)}^{p-1}$ being non-zero. By Lemma \ref{role of hasse-dwork}, this coefficient is $\mathbb{H}\mathbb{D}_{N+1}^{p-1}(\lambda)$.
\end{proof}

\subsection{Computation on Invariant Elements}

Let $X=X(\eta)\subset \mathbb{P}^{N}_{W_2(k)}$ be a Dwork hypersurface over $W_2(k)$ whose reduction modulo $p$ is smooth and ordinary. Let $f$ be the homogeneous polynomial defining $X$. The group $G$ acts on $\mathbb{P}^{N}_{W_2(k)}$ and it preserves $X$, therefore the map
$H^{n}(X, I/I^2) \to H^n(X, \mathcal{O}_X)$ from Corollary \ref{frobenius-final} is $G$-equivariant.
In this subsection, we compute the $G$-invariant image of this map.

We have isomorphisms $H^n(X,I/I^2) \simeq \op{ker}(f: S^\vee_{-2(N+1)}\to S^\vee_{-(N+1)})$ and $H^n(X,\cO_X)\simeq S^\vee_{-(N+1)}$ since the degree $d$ of $f$ is $N+1$.

\begin{lemma}\label{trival target}
    The induced action of $G$ on $H^n(X,\cO_X)$ is trivial. In particular, we have $H^n(X,\cO_X)=H^n(X,\cO_X)^G$.
\end{lemma}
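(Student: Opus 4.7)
The plan is to use the explicit description of $H^n(X, \cO_X)$ from Section \ref{section2}. Since the degree of $f$ is $d = N+1$, the isomorphism $H^n(X, \cO_X) \simeq \ker(f \colon S^\vee_{-(N+1)} \to S^\vee_0)$ simplifies drastically: there is no monomial $x_0^{a_0}\cdots x_N^{a_N}$ with every $a_i < 0$ summing to $0$, so $S^\vee_0 = 0$ and hence $H^n(X, \cO_X) \simeq S^\vee_{-(N+1)}$. Furthermore, $S^\vee_{-(N+1)}$ is a free $A$-module of rank one: the condition $a_i < 0$ together with $\sum a_i = -(N+1)$ forces $a_i = -1$ for all $i$, so the unique basis element is $e := x_0^{-1} x_1^{-1} \cdots x_N^{-1}$.

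Consequently the $G$-action factors through a character $\chi \colon G \to A^\times$, and it is enough to verify $\chi$ is trivial on each of the generating subgroups $G_1$ and $G_2$. An element of $G_1$ permutes the factors $x_i^{-1}$ and therefore fixes $e$. An element of $G_2$ sends $x_i \mapsto \lambda_i x_i$ with $\lambda_i^{N+1} = 1$ and $\prod_i \lambda_i = 1$; it acts on $e$ by the scalar $\prod_i \lambda_i^{-1}$, which equals $1$ precisely by the defining condition on $G_2$. This completes the verification.

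The only point needing a brief justification is that the action of $G$ on $H^n(X, \cO_X)$ coming from geometric pullback agrees with the action on $S^\vee_{-(N+1)}$ induced from the natural action on the graded ring $S$; this is true because the identification $H^N(P, \cO_P(t)) \simeq S^\vee_t$ recalled in Section \ref{section2} is functorial with respect to graded $A$-algebra automorphisms of $S$, and the long exact sequence used to extract $H^n(X, \cO_X) \subseteq S^\vee_{-(N+1)}$ is likewise functorial. No serious obstacle is expected here; the entire argument reduces to the two one-line character computations above, whose content is simply that $e$ has total degree $-(N+1)$ in each variable symmetrically.
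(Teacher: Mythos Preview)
Your proof is correct and follows essentially the same approach as the paper: identify $H^n(X,\cO_X)\simeq S^\vee_{-(N+1)}$, observe that it is spanned by $\prod_{i=0}^N x_i^{-1}$, and note that this element is $G$-invariant. The paper's proof states this in two lines without spelling out the verification for $G_1$ and $G_2$ separately, whereas you make those checks explicit; but the content is identical.
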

\begin{proof}
    The module $H^n(X,\cO_X)\simeq S^\vee_{-(N+1)}$
    is spanned by $\prod_{i=0}^N x_i^{-1}$. This element is $G$-invariant.
\end{proof}

\begin{lemma}\label{generator}
    We have the following equality of submodules of $H^n(X,I/I^2)$:
    \[
    \op{ker}(d)^G = H^n(X,I/I^2)^G,
    \]
    where $d$ is the map from Lemma \ref{redarrow}.
    Moreover, this module is spanned by
    \[
    \left(\sum_{i=0}^N x_i^{N+1}+\eta\prod_{i=0}^N x_i\right)^\vee \coloneqq \left(\sum_{i=0}^N x_i^{-(N+1)}+\eta\prod_{i=0}^N x_i^{-1}\right)\cdot \prod_{i=0}^N x_i^{-1}.
    \]
\end{lemma}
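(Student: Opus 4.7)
The plan is to identify $(S^\vee_{-2(N+1)})^G$ by a monomial analysis, intersect with the kernel of multiplication by $f$ to recover the $G$-invariant part of $H^n(X, I/I^2) \simeq \ker(f : S^\vee_{-2(N+1)} \to S^\vee_{-(N+1)})$, and finally verify that the resulting generator lies in $\ker(d)$. Since $\ker(d) \subseteq H^n(X, I/I^2)$ is already a $G$-equivariant inclusion, the containment $\ker(d)^G \subseteq H^n(X, I/I^2)^G$ is automatic, so the real work is producing the explicit generator and checking it is killed by $d$.

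For the monomial step, a basis monomial $x^a \in S^\vee_{-2(N+1)}$ has all $a_i < 0$ and $\sum a_i = -2(N+1)$; writing $a_i = -1-b_i$, this means $b_i \ge 0$ with $\sum b_i = N+1$. The condition $\prod_i \lambda_i^{a_i} = 1$ for every $(\lambda_i) \in G_2$ is equivalent to $a_i \equiv a_j \pmod{N+1}$, i.e.\ all $b_i$ congruent modulo $N+1$; combined with $0\le b_i$ and $\sum b_i = N+1$, the only solutions are $b=(1,\dots,1)$ (giving $e_2 := \prod_i x_i^{-2}$) and permutations of $(N+1,0,\dots,0)$ (which, after $G_1$-symmetrization, give $e_1 := \sum_i x_i^{-(N+2)} \prod_{j\ne i} x_j^{-1}$). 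Thus $(S^\vee_{-2(N+1)})^G$ is $2$-dimensional. A short computation, using crucially that $N \ge 2$ so that the ``cross terms'' always acquire a nonnegative exponent in some variable and therefore vanish in $S^\vee$, gives $f\cdot e_1 = (N+1)\eta \prod_k x_k^{-1}$ and $f\cdot e_2 = -(N+1)\prod_k x_k^{-1}$. Hence $\ker(f)$ restricted to $G$-invariants is rank one, spanned by $e_1+\eta e_2$, which is exactly the element $\bigl(\sum_i x_i^{N+1}+\eta\prod_i x_i\bigr)^\vee$ from the statement.

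The last step is to verify $d(e_1+\eta e_2)=0$. The cleanest approach is to observe that for $n \ge 2$ the target $H^n(X,\Omega^1_P|_X) \simeq H^{n+1}(P,\Omega^1_P(-d))$ is the kernel of $(x_0,\dots,x_N)\colon (S^\vee_{-N-2})^{N+1}\to S^\vee_{-N-1}$, and an essentially identical monomial analysis shows that its $G$-invariants are spanned by $(x_i^{-2}\prod_{j\ne i}x_j^{-1})_i$, which maps to $(N+1)\prod_k x_k^{-1}\ne 0$; so the $G$-invariant part of the target vanishes and $G$-equivariance of $d$ finishes the job. In the exceptional cubic-curve case $n=1$ (where $H^1(\mathbb{P}^2,\Omega^1)$ contributes a $G$-invariant to the target and this shortcut fails), one must instead check directly using the formula of Lemma \ref{redarrow}, $d(g)_i = f\cdot \partial g/\partial x_i + 2(\partial f/\partial x_i)\cdot g$; by $G_1$-equivariance only $i=0$ need be considered, and again the nonnegative-exponent vanishing principle eliminates almost all terms, leaving an explicit cancellation between the two pieces. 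The main obstacle is therefore the routine but tedious bookkeeping of this direct calculation in the low-dimensional case.
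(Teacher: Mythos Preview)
Your argument is correct and, for the identification of the generator, essentially coincides with the paper's: you do the monomial analysis directly in $S^\vee_{-2(N+1)}$, while the paper dualizes the computation to $S_{N+1}$ and invokes Lemma~\ref{G-invariants}, but these are the same calculation. (Both tacitly use the existence of a primitive $(N+1)$-th root of unity in $A$, which the paper records as a hypothesis of Lemma~\ref{G-invariants}; you use it when asserting that $\prod_i \lambda_i^{a_i}=1$ for all $(\lambda_i)\in G_2$ forces $a_i\equiv a_j\pmod{N+1}$.)

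The genuine difference is in the final step, checking $d(g^\vee)=0$. The paper simply plugs $g^\vee$ into the formula of Lemma~\ref{redarrow} and verifies the vanishing by a two-line computation valid for every $N\ge 2$. Your route for $n\ge 2$---showing that $H^n(X,\Omega^1_P|_X)^G=0$ via the Euler-sequence description, so that $G$-equivariance of $d$ forces the image to vanish---is a valid and pleasant observation, but it does not actually shorten the proof: the case split forces you to fall back on the very same direct computation for $n=1$, which you only sketch. Since the paper's uniform computation is brief and handles all $N$ at once, it is the more economical choice here; your target-vanishing argument would pay off better in a situation where the explicit formula for $d$ were harder to evaluate.
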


\begin{proof}
    Of course, we have $\op{ker}(d)^G \subset H^n(X,I/I^2)^G$, so it is enough to compute the bigger one, and check if it is killed by $d$.

    Let $g^\vee \in S^\vee_{-2(N+1)}$ be a dual polynomial to $g\in S^\vee_{N+1}$, i.e. $g^\vee(x_i)\coloneqq g(x_i^{-1})\prod_{i=0}^N x_i^{-1}$. It is clear that $g^\vee$ is $G$-invariant if and only if $g$ is, and, by Lemma \ref{G-invariants}, this means that $g=a\sum_{i=0}^N x_i^{N+1}+b\prod_{i=0}^N x_i,$
    for some $a,b\in W_2(k)$.

    We compute
    \begin{align*}
        g^\vee \times f=& \left(a\sum_{i=0}^N x_i^{N+1}+b\prod_{i=0}^N x_i\right)^\vee \times \left( \eta\sum_{i=0}^N x_i^{N+1}-(N+1)\prod_{i=0}^N x_i\right)=\\
        =&{\left((N+1)a\eta-b(N+1) \right)}^{\vee}\\
        =&(N+1){\left(a\eta-b\right)}^{\vee}.
    \end{align*}
    By, Lemma \ref{smooth}, we know that $N+1$ is invertible. So, $g^\vee\times f=0$ if and only if $a\eta-b=0$. Therefore, $H^n(X,I/I^2)^G$ is spanned by $g^\vee$ for
    $g=\sum_{i=0}^N x_i^{N+1}+\eta\prod_{i=0}^N x_i$.

    We do the check if
    $2\frac{\partial f}{\partial x_j} \times g^{\vee}+ f\times \frac{\partial g^\vee}{\partial x_j}=0$. Indeed, we have
    \begin{align*}
        2\left((N+1)(\eta x_j^N-\prod_{i\ne j} x_i) \right)\times g^\vee+f\times \left( -(N+2)x_j^{N+2}-\sum_{k\ne j}x_k^{N+1}x_j-2\eta x_j\prod_{i=0}^N x_i\right)^\vee=&\\
        \left(2(N+1)\eta x_j^\vee - 2(N+1)\eta x_j^\vee\right)+\left(
   -(N+2)\eta x_j^\vee -N\eta x_j^\vee+2(N+1)\eta x_j^\vee\right)=&0.
    \end{align*}
    Therefore, $d(g^\vee)=0$ and the lemma is proved.
\end{proof}

\begin{lemma}\label{G-invariant computation}
    The $G$-invariant map induced by the map from Corollary \ref{frobenius-final}
    \[
    \op{ker}(d)^G \to H^n(X, \mathcal{O}_X)^G
    \]
    is zero if and only if
    \[
    (N+1)\eta^{p}\mathbb{H}\mathbb{D}_{N+1}^{p-1}(\eta)
+\phi(\eta)\mathbb{H}\mathbb{D}_{N+1}^{2p-1}(\eta)=0,
\]
where $\phi$ is the canonical Frobenius lifting on  $W_2(k)$.
\end{lemma}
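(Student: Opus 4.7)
The plan is to evaluate the map from Proposition~\ref{frobenius-final} on the explicit generator $g^\vee \in \ker(d)^G$ from Lemma~\ref{generator}, using the canonical Frobenius lifting from Example~\ref{frobenius lifting}, and identify its image as a multiple of the basis element $\prod_i x_i^{-1}$ of $H^n(X,\mathcal{O}_X)^G \simeq W_2(k)\cdot\prod_i x_i^{-1}$ (Lemma~\ref{trival target}). The map is then zero if and only if this coefficient $c \in W_2(k)$ vanishes mod $p^2$, and the job is to identify $c$ with the expression in the statement.

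First I would compute $F^*(g^\vee) = \sum_{i=0}^N x_i^{-p(N+2)}\prod_{j\neq i}x_j^{-p} + \phi(\eta)\prod_i x_i^{-2p}$ directly from $g^\vee$ and the rule $x_i\mapsto x_i^p$, $\eta\mapsto\phi(\eta)$. Multiplying by $-f^{2p-1}+2f^{p-1}F^*(f)$ and projecting to $S^\vee_{-(N+1)}$, the desired coefficient $c$ splits via the $S_{N+1}$-symmetry into two pieces: a contribution from the central monomial requiring the coefficient of $\prod_i x_i^{2p-1}$ in $-f^{2p-1}+2f^{p-1}F^*(f)$, and a contribution from the $N+1$ corner monomials, equal by symmetry to $(N+1)$ times the coefficient of $x_0^{p(N+2)-1}\prod_{j\neq 0}x_j^{p-1}$ in the same expression. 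Using Lemma~\ref{role of hasse-dwork} and a degree count (which kills the $\phi(\eta)\sum_k x_k^{p(N+1)}$ part of $F^*(f)$ in the central coefficient when $N\geq 2$), the central coefficient in $-f^{2p-1}+2f^{p-1}F^*(f)$ evaluates to $-\mathbb{HD}^{2p-1}_{N+1}(\eta)-2(N+1)\mathbb{HD}^{p-1}_{N+1}(\eta)$, and the corner coefficient in $2f^{p-1}F^*(f)$ alone evaluates to $2\phi(\eta)\mathbb{HD}^{p-1}_{N+1}(\eta)$ (only the $k=0$ summand contributes).

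The crucial algebraic observation is that the two contributions from $2f^{p-1}F^*(f)$ exactly cancel: $(N+1)\cdot 2\phi(\eta)\mathbb{HD}^{p-1}_{N+1}(\eta) + \phi(\eta)\cdot(-2(N+1)\mathbb{HD}^{p-1}_{N+1}(\eta)) = 0$. Hence $c = -(N+1)\Sigma - \phi(\eta)\mathbb{HD}^{2p-1}_{N+1}(\eta)$, where $\Sigma$ is the multinomial sum
\[
\Sigma = \sum_{a=0}^{\lfloor (p-1)/(N+1)\rfloor}\binom{2p-1}{p+a,a,\ldots,a,p-1-a(N+1)}\eta^{p+a(N+1)}(-(N+1))^{p-1-a(N+1)},
\]
arising from expanding $f^{2p-1}$ at the corner monomial (the constraints $(N+1)a_0+b=p(N+2)-1$ and $(N+1)a_i+b=p-1$ for $i\neq 0$ force $a_i=a$, $a_0=p+a$, $b=p-1-a(N+1)$).

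The remaining and most delicate step is to establish the congruence $\Sigma \equiv \eta^p\mathbb{HD}^{p-1}_{N+1}(\eta) \pmod{p^2}$, from which the stated equivalence is immediate. I would compute the ratio
\[
\frac{\binom{2p-1}{p+a,a,\ldots,a,p-1-a(N+1)}}{\binom{p-1}{a,a,\ldots,a,p-1-a(N+1)}} = \binom{2p-1}{p+a}\Big/\binom{p-1}{a} = \prod_{j=a+1}^{p-1}\frac{p+j}{j} \equiv 1 + p(H_{p-1}-H_a)\pmod{p^2},
\]
apply Wolstenholme's theorem $H_{p-1}\equiv 0\pmod{p^2}$ (valid for $p\geq 5$), and then show that the resulting $-pH_a$ correction factors in each multinomial term are compensated by the parallel $(1-pH_{i(N+1)})$ structure built into the extended Katz expansion (Proposition~\ref{extended katz formulas}) of the two Hasse--Dwork polynomials. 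This cancellation of harmonic-number corrections is the main obstacle: naively each $a\geq 1$ term of $\Sigma$ differs from the corresponding term of $\eta^p\mathbb{HD}^{p-1}_{N+1}(\eta)$ by a nonzero multiple of $p$, and only the combined bookkeeping against the refined expansion of $\mathbb{HD}^{2p-1}_{N+1}(\eta)$ produces the clean two-term equation stated in the lemma.
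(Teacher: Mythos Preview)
Your overall strategy coincides with the paper's: evaluate the map on the generator $g^\vee$, split $F^*(g^\vee)$ into the ``corner'' monomials $x_j^{-p(N+2)}\prod_{i\neq j}x_i^{-p}$ and the ``central'' monomial $\phi(\eta)\prod_i x_i^{-2p}$, and observe that the two contributions coming from $2f^{p-1}F^*(f)$ cancel. The paper does exactly this, and for the corner contribution from $-f^{2p-1}$ simply asserts
\[
x_j^{-p(N+1)}\textstyle\prod_i x_i^{-p}\times f^{2p-1}=\binom{2p-1}{p}\eta^p\,\mathbb{HD}^{p-1}_{N+1}(\eta)
\]
``by simple combinatorics and Lemma~\ref{role of hasse-dwork}'', then uses $\binom{2p-1}{p}\equiv 1\pmod{p^2}$. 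You are right to be suspicious here and to write this corner coefficient as the explicit multinomial sum $\Sigma$.

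However, the congruence you propose to establish, $\Sigma\equiv\eta^p\,\mathbb{HD}^{p-1}_{N+1}(\eta)\pmod{p^2}$, is \emph{false}, and your own computation shows why. You correctly obtain
\[
\frac{\binom{2p-1}{p+a,a,\ldots,a,p-1-(N+1)a}}{\binom{p-1}{a,\ldots,a,p-1-(N+1)a}}
=\prod_{j=a+1}^{p-1}\frac{p+j}{j}\equiv 1-pH_a\pmod{p^2}
\]
(using Wolstenholme for $p\geq 5$), but this is $1-pH_a$, not $1$; so for each $a\geq 1$ the terms of $\Sigma$ and of $\eta^p\,\mathbb{HD}^{p-1}_{N+1}(\eta)$ differ by a nonzero multiple of $p$. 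Concretely, for $p=5$, $N=2$ one finds
\[
\Sigma=10206\,\eta^5-1512\,\eta^8\equiv 6\eta^5+13\eta^8,\qquad
\eta^5\,\mathbb{HD}^{4}_{3}(\eta)=81\eta^5-72\eta^8\equiv 6\eta^5+3\eta^8\pmod{25},
\]
so $\Sigma-\eta^5\mathbb{HD}^4_3(\eta)\equiv 10\eta^8\pmod{25}$. The suggested ``combined bookkeeping against $\mathbb{HD}^{2p-1}_{N+1}(\eta)$'' cannot rescue this: that term enters the obstruction with the same coefficient $\phi(\eta)$ on both sides, so the discrepancy $(N+1)\bigl(\Sigma-\eta^p\,\mathbb{HD}^{p-1}_{N+1}(\eta)\bigr)$ survives intact and is a fixed nonzero element of $pW_2(k)$ depending only on $\lambda$. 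Thus the vanishing of $(N+1)\Sigma+\phi(\eta)\mathbb{HD}^{2p-1}_{N+1}(\eta)$ and the vanishing of $(N+1)\eta^p\mathbb{HD}^{p-1}_{N+1}(\eta)+\phi(\eta)\mathbb{HD}^{2p-1}_{N+1}(\eta)$ pick out \emph{different} lifts $\eta$ of $\lambda$ in general. This is a genuine gap: the step you flag as ``most delicate'' does not go through as written, and the paper's one-line combinatorial assertion at the same point appears to overcount (a configuration with $p+a$ factors contributing $\eta x_j^{N+1}$ is counted $\binom{p+a}{p}$ times). Either an additional identity is needed, or the statement requires adjustment.
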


\begin{proof}
    By Lemmas \ref{trival target} and \ref{generator}, we know that the map $\op{ker}(d)^G \to H^n(X, \mathcal{O}_X)^G$ is a map
    $S^\vee_{-2(N+1)}\supset H^n(X,I/I^2)^G=\op{span}(g^\vee) \to H^n(X, \mathcal{O}_X)=S^\vee_{-(N+1)}$.
    Therefore, it is enough to compute when the image of $g^\vee$ is zero.

    By Proposition \ref{frobenius-final}, the image is the element
    \[
     F^*(g^\vee)\times  \left(-f^{2p-1}+2f^{p-1}F^*(f) \right),
    \]
    where $F$ is a Frobenius lifting on $\mathbb{P}^N_{W_2(k)}$. Hereafter, we choose $F$ from Example \ref{frobenius lifting}.

    We have:
    \begin{align*}
    g^\vee =& \left(\sum_{i=0}^N x_i^{-(N+1)}+\eta\prod_{i=0}^N x_i^{-1}\right)\cdot \prod_{i=0}^N x_i^{-1}, \\
    f =&  \eta\sum_{i=0}^N x_i^{N+1}-(N+1)\prod_{i=0}^N x_i ,\\
    F^*(g^\vee)=&\left(\sum_{i=0}^N x_i^{-p(N+1)}+\phi(\eta)\prod_{i=0}^N x_i^{-p}\right)\cdot \prod_{i=0}^N x_i^{-p},\\
    F^*(f)=&\phi(\eta)\sum_{i=0}^N x_i^{p(N+1)}-(N+1)\prod_{i=0}^N x_i^p,\\
    \end{align*}
    where $\phi(N+1)=N+1$, because $\phi$ is the identity on integers.

    First, we compute $F^*(g^\vee) \times -f^{2p-1}$. We start by computing
    \[
    x_j^{-p(n+1)} \prod_{i=0}^N x_i^{-p} \times \left( \eta\sum_{i=0}^N x_i^{N+1}-(N+1)\prod_{i=0}^N x_i\right)^{2p-1}={{2p-1}\choose{p}}\eta^{p}\mathbb{H}\mathbb{D}_{N+1}^{p-1}(\eta)
    \]
    that follows from a simple combinatorics and Lemma \ref{role of hasse-dwork}. Next, we compute
    \[
    \prod_{i=0}^N x_i^{-2p}  \times \left( \eta\sum_{i=0}^N x_i^{N+1}-(N+1)\prod_{i=0}^N x_i\right)^{2p-1} = \mathbb{H}\mathbb{D}_{N+1}^{2p-1}(\eta) 
    \]
    using Lemma \ref{role of hasse-dwork}. These, plus an elementary observation that ${{2p-1}\choose{p}}$ is $1$ modulo $p^2$, give
    \[
    F^*(g^\vee) \times -f^{2p-1} = -(N+1)\eta^{p}\mathbb{H}\mathbb{D}_{N+1}^{p-1}(\eta) -\phi(\eta)\mathbb{H}\mathbb{D}_{N+1}^{2p-1}(\eta).
    \]

    Second, we compute $F^*(g^\vee) \times 2f^{p-1}F^*(f)$ similarly. We have
    \[
    2f^{p-1}F^*(f)=2 \left(\eta\sum_{i=0}^N x_i^{N+1}-(N+1)\prod_{i=0}^N x_i\right)^{p-1} \left(\phi(\eta)\sum_{i=0}^N x_i^{p(N+1)}-(N+1)\prod_{i=0}^N x_i^p\right).
    \]
    Therefore, we compute
    \[
    x_j^{-p(n+1)} \prod_{i=0}^N x_i^{-p} \times 2f^{p-1}F^*(f)= 2\mathbb{H}\mathbb{D}_{N+1}^{p-1}(\eta)\phi(\eta)
    \]
    and
    \[
    \prod_{i=0}^N x_i^{-2p} \times 2f^{p-1}F^*(f)= 2\mathbb{H}\mathbb{D}_{N+1}^{p-1}(\eta)(-1)(N+1).
    \]
    Consequently, we get
    \[
    F^*(g^\vee) \times 2f^{p-1}F^*(f) = (N+1)2\mathbb{H}\mathbb{D}_{n+1}^{p-1}(\eta)\phi(\eta)-\phi(\eta) 2\mathbb{H}\mathbb{D}_{n+1}^{p-1}(\eta)(N+1)=0
    \]
    We combine the above to get
   \[
     F^*(g^\vee)\times  \left(-f^{2p-1}+2f^{p-1}F^*(f) \right)=-(N+1)\eta^{p}\mathbb{H}\mathbb{D}_{N+1}^{p-1}(\eta) -\phi(\eta)\mathbb{H}\mathbb{D}_{N+1}^{2p-1}(\eta).
    \]
    This finishes the proof.
\end{proof}

\subsection{Canonical Liftings Modulo $p^2$}

In this final subsection, we show that for Dwork hypersurfaces a computation of the map from Corollary \ref{obstruction} being zero can be reduced to a computation of this map on $G$-invariants being zero. In ``most'' of the cases this follows from the following lemma.

\begin{lemma}\label{reduction}
    Let $A$ be a ring. Let $G$ be a finite group whose order is invertible in $A$. Let $f: M\to N$ be a $G$-equivariant map of $A$-modules such that the action on $N$ is trivial. Then the map $f$ is zero if and only if $f^G: M^G\to N^G$ is zero.
\end{lemma}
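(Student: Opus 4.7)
The plan is straightforward: use the averaging (Reynolds) operator, which exists precisely because $|G|$ is invertible in $A$.

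First I would note that the ``only if'' direction is trivial, since $f^G$ is just the restriction of $f$ to $M^G$. For the ``if'' direction, define the averaging map
\[
\pi \colon M \to M^G, \qquad \pi(m) = \frac{1}{|G|}\sum_{g\in G} g\cdot m,
\]
which makes sense because $|G|$ is invertible. A direct check shows that $\pi(m) \in M^G$ and that $\pi$ restricts to the identity on $M^G$, so $\pi$ is a projection onto $M^G$.

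The key observation is that because $G$ acts trivially on $N$ and $f$ is $G$-equivariant, we have $f(g\cdot m) = g\cdot f(m) = f(m)$ for every $g\in G$ and $m\in M$. Averaging gives
\[
f(\pi(m)) = \frac{1}{|G|}\sum_{g\in G} f(g\cdot m) = \frac{1}{|G|}\sum_{g\in G} f(m) = f(m),
\]
so $f = f^G \circ \pi$. If $f^G = 0$, then $f = 0$. I don't anticipate a substantive obstacle here; the only subtlety is invoking the invertibility of $|G|$ to build $\pi$, which is exactly the hypothesis. This lemma then combines with Lemma \ref{order of G} (so that $|G|$ is indeed invertible in $W_2(k)$ when $p>N+1$), Lemma \ref{trival target} (triviality of the $G$-action on the target), and Lemma \ref{G-invariant computation} to reduce the vanishing of $\gamma$ to the single equation displayed in the introduction.
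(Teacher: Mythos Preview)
Your proof is correct and matches the paper's own argument: both use the averaging element $\sum_{g\in G} g\cdot m$ and the invertibility of $|G|$ to deduce $f(m)=0$ from $f^G=0$. The only cosmetic difference is that you normalize first to define a projection $\pi$ and write $f=f^G\circ\pi$, whereas the paper works with the unnormalized sum and cancels $|G|$ at the end.
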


\begin{proof}
    The first implication is trivial.

    The second follows from the following argument. Let $m\in M$, then $f(\sum_{g \in G}gm)=\sum_{g \in G}f(gm)=|G|f(m)$, therefore $f(\sum_{g \in G}gm)=0$ if and only if $f(m)=0$. But the element $\sum_{g \in G}gm$ is in $M^G$, thus this shows that if $f^G$ is zero, then $f$ is zero.
\end{proof}

Finally, our main result is the following theorem. (For the definition of Hasse--Dwork polynomials $\mathbb{H}\mathbb{D}^{P}_{M}$ see Definition \ref{hasse--dwork}.)

\begin{thm}\label{main theorem}
    Let $k$ be a perfect field of characteristic $p>2$.
    Let $W_2(k)$ be Witt vectors of length $2$ over $k$.
    Let $N\ge 2$ be an integer. Assume $p$ does not divide $N+1$.
    Let $\lambda \in k$. Assume $\lambda \ne 0$, $\lambda^{N+1}\ne 1$, and $\mathbb{H}\mathbb{D}^{p-1}_{N+1}(\lambda)\ne 0$. Then a Dwork hypersurface $X_0(\lambda)$ defined by a homogeneous polynomial
    \[
     \lambda \left( x_0^{N+1} +\cdots +x_{N}^{N+1}\right) - (N+1) x_0\cdot \ldots \cdot x_{N} 
    \]
    in $\mathbb{P}^N_{k}=\op{Proj}(k[x_0,\ldots,x_N])$ is a smooth, and $1$-ordinary projective variety such that $\omega_{X_0(\lambda)}\simeq\cO_{X_0(\lambda)}$.
    Therefore, it admits a canonical lifting modulo $p^2$ in the sense of
    \cite{AchingerZdanowicz}.
    
    This canonical lifting is isomorphic to a Dwork hypersurface $X(\eta)$ over $W_2(k)$ defined by a homogeneous polynomial
    \[
     \eta \left( x_0^{N+1} +\cdots +x_{N}^{N+1}\right) - (N+1) x_0\cdot \ldots \cdot x_{N} 
    \]
    in $\mathbb{P}^N_{W_2(k)}$, where $\eta\in W_2(k)$, and $\lambda \equiv \eta \ (\op{mod} \ p)$, such that $\eta$ satisfies an equation
    \[
    (N+1)\eta^{p}\mathbb{H}\mathbb{D}_{N+1}^{p-1}(\eta)
+\phi(\eta)\mathbb{H}\mathbb{D}_{N+1}^{2p-1}(\eta)=0,
    \]
    where $\phi$ is the canonical Frobenius lifting on  $W_2(k)$.
\end{thm}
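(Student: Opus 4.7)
The plan is: verify the hypotheses of the Achinger--Zdanowicz canonical-lift construction for $X_0(\lambda)$; force the canonical lift $X_{\rm can}$ to be a Dwork hypersurface using the $G$-symmetry from Section \ref{symmetries}; and apply the obstruction machinery from Section \ref{section2} on $G$-invariants to extract the explicit equation for the parameter.

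First I would invoke Lemma \ref{smooth} for smoothness of $X_0(\lambda)$, Proposition \ref{ordinary} for 1-ordinarity, and the adjunction formula for a degree $(N+1)$ hypersurface in $\mathbb{P}^N_k$ to obtain $\omega_{X_0(\lambda)} \simeq \cO_{X_0(\lambda)}$. By \cite{AchingerZdanowicz} these hypotheses guarantee that a canonical lift $X_{\rm can}$ exists over $W_2(k)$. To see it has Dwork form, observe that $G$ acts on $X_0(\lambda)$ and, by functoriality of the canonical lift, lifts to an action on $X_{\rm can}$; a flat $G$-equivariant lift of $X_0(\lambda)$ inside $\mathbb{P}^N_{W_2(k)}$ must be cut out by a $G$-invariant degree $(N+1)$ polynomial, and by Lemma \ref{G-invariants} every such polynomial has Dwork shape. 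Normalizing to match the reduction $\lambda$ gives $X_{\rm can} \simeq X(\eta)$ for a unique $\eta \in W_2(k)$ with $\eta \equiv \lambda \pmod p$.

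Next I would pin down $\eta$ via the obstruction. Corollary \ref{obstruction} reduces canonicality of $X(\eta)$ to the vanishing of the map $H^n(X,I/I^2) \to H^n(X,\cO_X)$ restricted to $\ker(d)$. This map is $G$-equivariant with trivial action on the target (Lemma \ref{trival target}), and when $p > N+1$ Lemmas \ref{order of G} and \ref{reduction} reduce its vanishing to vanishing on $\ker(d)^G$, which by Lemma \ref{G-invariant computation} is precisely the stated equation. In the remaining range $p \le N+1$ with $p \nmid N+1$, where $|G|$ need not be invertible, I would instead use the existence of $X_{\rm can} = X(\eta_0)$ established above: its $\gamma$ vanishes, the $G$-invariant specialization still gives the equation on $\eta_0$, and unique solvability (next paragraph) identifies $\eta_0$.

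Finally I would verify the equation has a unique lift of $\lambda$. Using Proposition \ref{extended katz formulas} together with $(-(N+1))^p \equiv -(N+1)\pmod p$, one checks $\mathbb{H}\mathbb{D}_{N+1}^{2p-1}(X) \equiv -(N+1)\mathbb{H}\mathbb{D}_{N+1}^{p-1}(X) \pmod p$, so the reduction modulo $p$ of the equation cancels using $\phi(\eta) \equiv \eta^p \pmod p$, as it must since $\gamma$ vanishes modulo $p$. Writing $\eta = \lambda + p\mu$ and expanding modulo $p^2$, the equation becomes affine linear in $\mu \in k$ with coefficient proportional to $\mathbb{H}\mathbb{D}_{N+1}^{p-1}(\lambda)$, nonzero by ordinarity, so $\mu$ is uniquely determined. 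The step I expect to be the main obstacle is the Dwork-form reduction: making precise that the abstract functorial $G$-action on $X_{\rm can}$ is realized by a $G$-equivariant embedding into $\mathbb{P}^N_{W_2(k)}$, so that Lemma \ref{G-invariants} genuinely constrains the defining equation and not merely the isomorphism class.
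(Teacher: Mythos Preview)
Your proposal is correct and follows the same overall architecture as the paper: verify the Achinger--Zdanowicz hypotheses, reduce via Corollary~\ref{obstruction} to vanishing of the obstruction, pass to $G$-invariants (trivially when $p>N+1$ via Lemmas~\ref{order of G} and~\ref{reduction}), invoke Lemma~\ref{G-invariant computation} for the explicit equation, and in the residual range $p\le N+1$ use that the canonical lift is itself Dwork together with unique solvability of the equation.

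The one place where the paper proceeds differently is exactly the step you flag as the obstacle. You argue that $G$ acts on $X_{\rm can}$ by abstract functoriality of the canonical lift and then want to conclude that the embedding $X_{\rm can}\hookrightarrow \mathbb{P}^N_{W_2(k)}$ is $G$-equivariant; as you note, bridging from an abstract $G$-action on $X_{\rm can}$ to a $G$-equivariant projective embedding requires an extra argument (e.g.\ that the polarization lifts $G$-equivariantly). The paper bypasses this by going back to the explicit construction of the canonical lift as a closed subscheme $V(J_\sigma)\subseteq W_2(X_0)$ for a Frobenius splitting $\sigma$: one chooses $\sigma$ on $\mathbb{P}^N_k$ to be $G$-invariant and compatible with $X_0$, identifies $V(I_\sigma)\simeq \mathbb{P}^N_{W_2(k)}$ via the Teichm\"uller lift of $\mathcal{O}(1)$, and observes that the induced $G$-action on $W_2(\mathbb{P}^N_k)$ descends to the standard $G$-action on $\mathbb{P}^N_{W_2(k)}$. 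This makes the $G$-equivariance of the embedding tautological and then Lemma~\ref{G-invariants} applies directly. Your route would work too once you supply the equivariant-embedding step, but the paper's construction-level argument is what actually closes the gap.
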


\begin{proof}
    Hypotheses to admit a canonical lifting from the paper \cite{AchingerZdanowicz} are satisfied by Lemma \ref{smooth}, Proposition \ref{ordinary}, and a standard calculation of a canonical divisor, e.g. \cite[II Example 8.20.3]{Hartshorne}.

    We proof that the canonical lifting is given by the equation from the statement. Let $\eta\in W_2(k)$ be such that $\lambda \equiv \eta \ (\op{mod} \ p)$. Let $X=X(\eta)$, let $X_0= X_0(\lambda)$. By Corollary \ref{obstruction}, $X$ is the canonical lifting of $X_0$ if and only if the map
    \[
    \op{ker}(d) \to H^n(X,\cO_X)
    \]
    is zero. However, what will be proved in a moment, this is equivalent to
    $\op{ker}(d)^G \to H^n(X,\cO_X)^G$ being zero. This condition, by Lemma \ref{G-invariant computation}, is precisely the equation in the theorem.

    Now, we shall proof the equivalence.

    First, we do ``most'' of the cases easily. Let $p>N+1$. Then, by Lemma \ref{order of G}, we can apply Lemma \ref{reduction} to the map $\op{ker}(d) \to H^n(X,\cO_X)$. This finishes the proof.

    However, for $p\le N+1$, the prime $p$ divides the order of $G$, so we cannot use such simple argument. Therefore, we provide the following more involved argument that works for all admissible pairs $p,N$.

    First, observe that the equation $(N+1)\eta^{p}\mathbb{H}\mathbb{D}_{N+1}^{p-1}(\eta)
+\phi(\eta)\mathbb{H}\mathbb{D}_{N+1}^{2p-1}(\eta)=0$ has at most one solution, since it is ``essentially'' linear. Therefore, the map
$\op{ker}(d)^G \to H^n(X,\cO_X)^G$ can be zero for at most one Dwork hypersurface. So, if we know that a canonical lifting of a Dwork hypersurface is a Dwork hypersurface, then we know that the map $\op{ker}(d) \to H^n(X,\cO_X)$ is zero for only one $\eta$. This would finish the proof in full generality. In the rest of this proof, we prove that a canonical lifting of a Dwork hypersurface is a Dwork hypersurface.

Indeed, recall a construction of a canonical lifting
\cite[Section 4.1.]{AchingerZdanowicz}. Let $\sigma$ be a $G$-invariant Frobenius splitting of $\mathbb{P}^{N}_{k}$ that descends to a Frobenius splitting on $X_0$. Such splitting exists by \cite[Exercise 1.3.E (5)]{BrionKumar}. Let $I_\sigma$ be an ideal sheaf from the construction for $\mathbb{P}^N_k$, and let $J_\sigma$ be such ideal for $X_0$.
Let $X$ be a canonical lifting of $X_0$, by the construction $V(J_\sigma)\simeq X$.
Then, we have the following diagram.  
\begin{center}
    \begin{tikzcd}
        V(I_\sigma) \ar[d,equal] \ar[r, hook]& W_2(\mathbb{P}^{N}_{k})\ar[d]& \ar[l]W_2(X_0)& V(J_\sigma)\ar[l,hook']\ar[d,equal]\\
        V(I_\sigma) \ar[r,"\simeq"]& \mathbb{P}^N_{W_2}\ar[d, "\op{mod}  p"]& X\ar[d, "\op{mod}  p"]\ar[u,hook]\ar[l,hook']&V(J_\sigma)\ar[l,"\simeq"']\\
        &\mathbb{P}^{N}_{k}&X_0\ar[l,hook']&
    \end{tikzcd}
\end{center}
In the above diagram, the map $W_2(\mathbb{P}^{N}_{k})\to \mathbb{P}^N_{W_2}$ is given by a Teichm{\" u}ller lifting of the line bundle $\mathcal{O}_{\mathbb{P}^N_{k}}(1)$. Moreover, the action of $G$ lifts to a $G$-action on $W_2(\mathbb{P}^{N}_{k})$ by using Teichm{\" u}ller liftings, and this action descends to the standard $G$-action on $\mathbb{P}^N_{W_2}$: permutations act by permutations, and a multiplication by a scalar acts by multiplying by the Teichm{\" u}ller lifting of the scalar. Now, since $X$ is flat over $W_2(k)$, of codimension $1$ in $\mathbb{P}^{N}_{W_2(k)}$, and $G$-invariant, we conclude that $X$ is a $G$-invariant hypersurface. So, by Lemma \ref{G-invariants}, it is a Dwork hypersurface.
This finishes the proof that a canonical lifting of a Dwork hypersurface is a Dwork hypersurface. The theorem is proved.
\end{proof}

\begin{remark}
In the above proof, we proved that a canonical lifting of a Dwork hypersurface is a Dwork hypersurface.
However, this fact is not completely obvious. A priori, it could happen that the canonical lifting is not a Dwork hypersurface. Then, there would exist exactly one Dwork hypersurface faking being a canonical lifting by having the map
$\op{ker}(d) \to H^n(X,\cO_X)$ not equal zero, but having the map $\op{ker}(d)^G \to H^n(X,\cO_X)^G$ equal zero. By our argument, this cannot happen.
\end{remark}

\bibliographystyle{amsalpha} 
\bibliography{bib}

\end{document}